\newtheorem{theorem}{Theorem}[section]
\newtheorem{conj}[theorem]{Conjecture}
\newtheorem{lemma}[theorem]{Lemma}
\newtheorem{claim}{Claim}[theorem]
\theoremstyle{definition}
\newcommand{\mcf}{\mathcal{F}}
\title{The extremal functions for triangle-free graphs with excluded minors}
\author{Robin Thomas \and Youngho Yoo}
\begin{document}

\centerline{\Large \bf THE EXTREMAL FUNCTIONS FOR TRIANGLE-FREE}
\smallskip
\centerline{{\Large\bf  GRAPHS WITH EXCLUDED MINORS}%
\footnote{
12 January  2018, revised 14 July  2018.
}}

\bigskip
\bigskip

\centerline{{\bf Robin Thomas}%
\footnote{Partially supported by NSF under Grant No.~DMS-1700157.}
}
\smallskip
\centerline{and}
\smallskip
\centerline{{\bf Youngho Yoo}}
\bigskip
\centerline{School of Mathematics}
\centerline{Georgia Institute of Technology}
\centerline{Atlanta, Georgia  30332-0160, USA}
\bigskip

\begin{abstract}
\noindent
We prove two results:
\begin{enumerate}
\item A graph $G$ on at least seven  vertices with a vertex $v$ such that $G- v$ is planar and  $t$ triangles satisfies $|E(G)| \leq 3|V(G)|- 9 + t/3$.
\item For $p=2,3,\ldots,9$, a triangle-free graph $G$ on at least $2p-5$ vertices with no $K_p$-minor satisfies $|E(G)|\leq (p-2)|V(G)| - (p-2)^2$.
\end{enumerate}
\end{abstract}

\section{Introduction}
All {\em graphs} in this paper are finite and simple. 
{\em Cycles} have no ``repeated" vertices.
A graph is a {\em minor} of another if the first can be obtained from a subgraph of the second by contracting edges. 
An {\em $H$-minor} is a minor isomorphic to $H$.
Mader~\cite{MadHom} proved the following beautiful theorem.

\begin{theorem}
\label{madertheorem}
For $p=2,3,\ldots,7$, a graph with no $K_p$-minor and  $V \geq p-1$ vertices has at most $(p-2)V - \binom{p-1}{2}$ edges.
\end{theorem}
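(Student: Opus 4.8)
The natural strategy is induction on $p$ and, for each fixed $p$, induction on $n:=|V(G)|$. The cases $p\le 4$ are classical and easy: a graph with no $K_2$-minor has no edge; one with no $K_3$-minor is a forest, so $|E|\le n-1$; and one with no $K_4$-minor has treewidth at most $2$, so $|E|\le 2n-3$. So fix $p\in\{5,6,7\}$. We may assume $n\ge p$ — the case $n=p-1$ is immediate, since a graph on $p-1$ vertices has at most $\binom{p-1}{2}=(p-2)(p-1)-\binom{p-1}{2}$ edges — and, by adding edges, that $G$ is edge-maximal subject to having no $K_p$-minor; the goal is $|E(G)|\le(p-2)n-\binom{p-1}{2}$.

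Two reductions absorb the bulk of the argument. First, if $G$ has a vertex $v$ with $\deg(v)\le p-2$, then $G-v$ has $n-1\ge p-1$ vertices, no $K_p$-minor, and at least $|E(G)|-(p-2)$ edges, so the inductive hypothesis applied to $G-v$ gives the bound for $G$. Second, if $G$ has a cutset $S$ with $|S|\le p-2$ that induces a clique, write $G=G_1\cup G_2$ with $V(G_1)\cap V(G_2)=S$; each $G_i$ is a proper induced subgraph with no $K_p$-minor, and combining the inductive hypothesis with the elementary inequality $(p-2)k-\binom{k}{2}\le\binom{p-1}{2}$ for integers $0\le k\le p-1$ gives $|E(G)|=|E(G_1)|+|E(G_2)|-\binom{|S|}{2}\le(p-2)n-\binom{p-1}{2}$. (A short separate calculation handles the case in which one of the $G_i$ has fewer than $p-1$ vertices: such a side, being edge-maximal, is a clique on at most $p-1$ vertices, and the contribution of a small clique-component is again controlled by the same inequality.) Hence we may assume $G$ has minimum degree at least $p-1$ and no clique cutset of order at most $p-2$.

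The crux is to analyse the edge-maximal $K_p$-minor-free graphs that survive these reductions — graphs that are ``essentially $(p-1)$-connected'' apart from cutsets of order at most $p-2$ inducing no clique — and to verify that none of them exceeds the bound. For $p=5$ this is exactly Wagner's structure theorem: every edge-maximal $K_5$-minor-free graph is built from planar triangulations and the Wagner graph $V_8$ by clique-sums along edges and triangles, so a surviving graph is either a $4$-connected planar triangulation or $V_8$; since planar triangulations have exactly $3n-6$ edges and $V_8$ has $12\le 3\cdot 8-6$, we are done. For $p=6$ and $p=7$ no such clean classification is available, and one must extract the minor by hand: using that $p-1\in\{5,6\}$ is small while $n\ge p$, a careful case analysis on the neighbourhoods of a well-chosen vertex, on the small separations of $G$, and on the ways branch sets can be routed through them should show that such a graph either contains a $K_p$-minor — a contradiction — or belongs to a short explicit list, each member of which is checked directly to satisfy $|E(G)|\le(p-2)n-\binom{p-1}{2}$.

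The main obstacle is precisely this last step for $p\in\{6,7\}$. Without a structure theorem, the existence of the $K_p$-minor has to be read off from connectivity and degree data alone, and this is exactly where the hypothesis $p\le 7$ is indispensable: the analogous bound is false for $p\ge 8$ — for instance, $K_{2,2,2,2,2}$ has no $K_8$-minor (any partition of its $10$ vertices into $8$ connected branch sets forces two singleton branch sets inside a common part, which are then non-adjacent) yet has $40$ edges, more than $6\cdot 10-\binom{7}{2}=39$ — so no purely ``soft'' argument can work and the smallness of $p$ must be used essentially.
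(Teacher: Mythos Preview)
The paper does not prove this theorem; it simply attributes it to Mader~\cite{MadHom} and uses it as input. So there is no ``paper's own proof'' to compare against, and your task was really to supply a self-contained argument or a pointer to one.

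Your reductions (delete a vertex of degree at most $p-2$; split along a clique cutset of order at most $p-2$) are the standard opening moves, and your treatment of $p\le 5$ via Wagner's structure theorem is fine. But for $p=6$ and $p=7$ you do not actually prove anything: you write that ``a careful case analysis on the neighbourhoods of a well-chosen vertex \ldots\ should show'' the result, and then immediately concede that this is ``the main obstacle.'' That is not a proof sketch---it is a description of where the difficulty lies. The entire content of Mader's theorem for $p=6,7$ is precisely the assertion that a graph with minimum degree $\ge p-1$ and no small clique cutset has a $K_p$-minor, and you have offered no mechanism for producing that minor. Your own observation that $K_{2,2,2,2,2}$ kills the analogous statement for $p=8$ shows that the reductions alone cannot possibly suffice; some genuinely new idea specific to $p\le 7$ is required, and your proposal does not contain one. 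As written, the argument for $p\in\{6,7\}$ is a gap, not a proof.
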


For large $p$ however, a graph on $V$ vertices with no $K_p$-minor can have up to $\Omega(p \sqrt{\log p} V)$ edges as shown by several people  
(Kostochka~\cite{Kosminor,Koshad}, and
Fernandez de la Vega \cite{FerdlV} based on Bollob\'as, Catlin and Erd\"os \cite{BolCatErd}),. Already for $p=8,9$, there are $K_p$-minor-free graphs on $V$ vertices with strictly more than $ (p-2)V - \binom{p-1}{2}$ edges, but the exceptions are known. 
Given a graph $G$ and a positive integer $k$, we define \emph{$(G,k)$-cockades} recursively as follows. A graph isomorphic to $G$ is a $(G,k)$-cockade. Moreover, any graph isomorphic to one obtained by identifying complete subgraphs of size $k$ of two $(G,k)$-cockades is also a $(G,k)$-cockade,
and every $(G,k)$-cockade is obtained this way. The following is a theorem of J\o rgensen~\cite{Jor}.

\begin{theorem}
\label{jorgensentheorem}
A graph on $V\ge7$ vertices with no $K_8$-minor has at most  $ 6V - 21$ edges,  unless it is a $(K_{2,2,2,2,2},5)$-cockade.
\end{theorem}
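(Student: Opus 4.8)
The plan is to prove Theorem~\ref{jorgensentheorem} by induction on $V:=|V(G)|$. Two preliminary facts: every $(K_{2,2,2,2,2},5)$-cockade on $m$ vertices has exactly $6m-20$ edges (by induction on the recursive construction, since $\binom{5}{2}=10$), and no cockade has a $K_8$-minor — indeed a $K_8$-model in a $10$-vertex graph has at least six singleton branch sets while six vertices of $K_{2,2,2,2,2}$ are never pairwise adjacent, and clique-sums over cliques of size at most $5$ preserve the absence of a $K_8$-minor (a standard fact). The first fact lets us assume $G$ is edge-maximal with no $K_8$-minor: adding edges to a non-cockade with at least $6V-20$ edges while staying $K_8$-minor-free cannot reach a cockade, since a cockade would have exactly $6V-20$ edges and hence no edge could have been added. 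So suppose $G$ is edge-maximal with no $K_8$-minor, is not a $(K_{2,2,2,2,2},5)$-cockade, satisfies $|E(G)|\ge 6V-20$, and has $V$ minimum; we seek a contradiction. For $7\le V\le 9$ a graph with at least $6V-20$ edges is $K_V$ with at most two edges deleted (none such exists for $V=7$), and a direct check shows it has a $K_8$-minor; so the base case reduces to $V=10$, where the genuine statement is that a $10$-vertex graph with at least $40$ edges and no $K_8$-minor must be $K_{2,2,2,2,2}$. I would prove this directly — e.g.\ by first reducing to the $6$-connected case via the separation argument below (whose pieces then have at most nine vertices) and then analyzing $6$-connected $10$-vertex graphs with at least $40$ edges by hand.

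Now let $V\ge 11$, and suppose $(A,B)$ is a separation of $G$ of order $k\le 5$ with $A\ne V(G)\ne B$, so $|A|,|B|\ge 6$. Since $G$ is edge-maximal without a $K_8$-minor and $k<8$, the set $A\cap B$ is a clique (a standard lemma for such $G$), and since clique-sums over cliques of size at most $5$ preserve the absence of a $K_8$-minor, $G[A]$ and $G[B]$ have no $K_8$-minor. Applying the inductive hypothesis — or the base cases, when $G[A]$ or $G[B]$ has at most nine vertices — to $G[A]$ and $G[B]$, and using $|E(G)|=|E(G[A])|+|E(G[B])|-\binom{k}{2}$, $|A|+|B|=V+k$, and $6k-\binom{k}{2}\le 20$ for all $k\le 5$: if $G[A]$ or $G[B]$ is not a cockade then $|E(G)|\le 6V-21$, and if both are cockades then so is $G$ — in either case contradicting the choice of $G$. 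Hence $G$ is $6$-connected. Since a $(K_{2,2,2,2,2},5)$-cockade with at least two pieces has a $5$-element cutset and so is not $6$-connected, and $K_{2,2,2,2,2}$ has only $10$ vertices, the inductive step reduces to the following core claim: \emph{every $6$-connected graph $G$ on $V\ge 11$ vertices with $|E(G)|\ge 6V-20$ has a $K_8$-minor.}

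To prove the core claim, observe that $|E(G)|\ge 6V-20>5V-15$ for $V\ge 6$, so by Theorem~\ref{madertheorem} with $p=7$ the graph $G$ has a $K_7$-minor; fix a model $(X_1,\dots,X_7)$ with $\sum_i|X_i|$ minimum and each $G[X_i]$ a tree, so that every leaf of each $G[X_i]$ is the unique vertex of its branch set with a neighbour in some other branch set (else it could be deleted), whence each $G[X_i]$ has at most six leaves. If some vertex $w\notin\bigcup_iX_i$ has a neighbour in all seven branch sets, then $(X_1,\dots,X_7,\{w\})$ is a $K_8$-model and we are done. Otherwise I would exploit the edge bound $|E(G)|\ge 6V-20$ (to limit how small the model can be and to force structure among the vertices outside it) together with the $6$-connectivity (to reroute paths, and to grow or split branch sets) to produce a $K_8$-model or else to contradict the $6$-connectivity, the edge bound, or the minimality of the model. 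I expect this last step to be the main obstacle, and it is intrinsically delicate: the argument must be sharp enough to stop just short of a contradiction precisely in the extremal configuration — which on $10$ vertices is $K_{2,2,2,2,2}$ — so carrying it through requires the detailed case analysis of the local structure of a minimum $K_7$-model and of its exterior that forms the substance of Jørgensen's proof.
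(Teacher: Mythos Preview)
The paper does not prove this theorem. Theorem~\ref{jorgensentheorem} is quoted as a result of J\o rgensen~\cite{Jor} and is used only as a black box in deducing Theorem~\ref{trifreeminorcor} (the case $p=8$) from Theorem~\ref{trifreeminortheorem}. There is therefore no ``paper's own proof'' to compare your attempt against; the intended argument for this statement is simply the citation.

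As for your outline on its own merits: the reduction to the $6$-connected case via clique separations is standard and is indeed the shape of J\o rgensen's argument, and your edge-count arithmetic for the separation step is correct. But you yourself identify the real content --- the core claim that every $6$-connected graph on $V\ge 11$ vertices with at least $6V-20$ edges has a $K_8$-minor --- and then explicitly defer it, writing that ``carrying it through requires the detailed case analysis \dots\ that forms the substance of J\o rgensen's proof.'' That is not a proof but an acknowledgement that the hard step remains. The same applies to the $V=10$ base case, which you propose to handle ``by hand'' without doing so. So what you have is a correct scaffolding around a missing core, and the missing core is precisely the theorem.
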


\noindent
The next theorem is due to Song and the first author~\cite{SonThoK9}.

\begin{theorem}
\label{songthomastheorem}
A graph on $V\ge8$ vertices with no $K_9$-minor has at most $7V - 28$ edges, unless it is a $(K_{1,2,2,2,2,2},6)$-cockade or isomorphic to $K_{2,2,2,3,3}$.
\end{theorem}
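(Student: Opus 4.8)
The plan is to argue by minimal counterexample. Suppose the theorem fails and let $G$ be a counterexample with $V := |V(G)|$ as small as possible: thus $G$ has no $K_9$-minor, $|E(G)| \ge 7V - 27$, $G$ is neither a $(K_{1,2,2,2,2,2},6)$-cockade nor isomorphic to $K_{2,2,2,3,3}$, and every graph on fewer than $V$ vertices satisfies the conclusion. Since $|E(G)| \le \binom{V}{2}$ and $V \ge 8$, already $V \ge 9$. The next step is a handful of local reductions: if some vertex $v$ has $\deg(v) \le 7$, then $G - v$ has no $K_9$-minor and at least $7(V-1) - 27$ edges, so by minimality it is one of the two exceptional graphs; as these have exactly $7V' - 27$ edges, this forces $\deg(v) = 7$ and $|E(G)| = 7V - 27$. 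The same bookkeeping applied to edge contraction shows that if $uv$ is an edge with $|N(u) \cap N(v)| \le 6$ then $G/uv$ is exceptional. Hence $\delta(G) \ge 7$, $|E(G)| = 7V - 27$, and every edge at a $7$-valent vertex contracts to an exceptional graph --- a useful handle for the endgame.

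Next I would reduce to the case that $G$ is $6$-connected with no $K_6$-cutset. The tool is the standard analysis of a separation $(A,B)$ of small order: replacing $G[A]$ and $G[B]$ by the graphs $G_A$, $G_B$ obtained by completing $A \cap B$ to a clique, one argues these remain $K_9$-minor-free, applies the inductive hypothesis to each, and combines the edge counts of the resulting clique-sum. When the order is at most $5$ this yields a contradiction or identifies the structure of $G$; when the order is exactly $6$ with $A \cap B$ complete, iterating the clique-sum is precisely the definition of a $(K_{1,2,2,2,2,2},6)$-cockade, contrary to hypothesis. So $G$ may be taken $6$-connected; note that $K_{2,2,2,3,3}$, being $9$-connected, survives this reduction, so it will have to be recognised inside the main argument rather than eliminated here.

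The core is a case analysis driven by a vertex $v$ of minimum degree $d = \deg(v)$, with Theorem~\ref{jorgensentheorem} as the principal external input. If $d$ is small (from the above, $d = 7$, or $d = 8$ with $G - v$ extremal), then $G - v$ and all contractions of edges at $v$ are pinned down to exceptional or extremal graphs; using that a $7$-valent vertex cannot be a singleton branch set of a prospective $K_9$-minor, and the explicit minor structure of $K_{2,2,2,3,3}$ and of $(K_{1,2,2,2,2,2},6)$-cockades --- each has a $K_8$-minor, since $K_{2,2,2,3,3}$ contracts onto $K_{1,2,2,2,2,2}$ and the latter has one by an explicit model --- one checks over the finitely many ways $v$ can attach that the only escape from a $K_9$-minor is for $G$ to enlarge a cockade, a contradiction. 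If $d$ is large, the identity $|E(G)| = 7V - 27$ forces $G[N(v)]$ to be dense and $N(v)$ to be highly linked in $G - v$, and $|E(G - v)|$ exceeds J\o rgensen's bound $6(V-1) - 21$, so $G - v$ has a $K_8$-minor (the $(K_{2,2,2,2,2},5)$-cockade case being dealt with separately). The goal is then to reroute this into eight connected subgraphs of $G - v$ that are pairwise adjacent and each meet $N(v)$; adjoining the branch set $\{v\}$ produces the forbidden $K_9$-minor. The rerouting uses Menger-type linkages, the $6$-connectivity, and the local density around $v$.

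The step I expect to be the main obstacle is exactly this rerouting: showing that one can always either carry a $K_8$-minor of $G - v$ onto the neighbourhood of a minimum-degree vertex, or else conclude $G \cong K_{2,2,2,3,3}$. This requires tracking how the highly non-unique $K_8$-minor models produced by Theorem~\ref{jorgensentheorem} interact with $N(v)$ and with whatever cut structure remains after the connectivity reductions, and the case bookkeeping needed to exclude every stubborn configuration other than $K_{2,2,2,3,3}$ will likely dominate the proof. A secondary, more mechanical difficulty is the finite but delicate verification in the small-degree case that attaching a $7$- or $8$-valent vertex to $K_{2,2,2,3,3}$, or to a cockade, never avoids a $K_9$-minor except by extending the cockade.
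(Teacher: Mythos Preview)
The paper does not prove this statement. Theorem~\ref{songthomastheorem} is quoted as a known result due to Song and Thomas~\cite{SonThoK9} and is used as a black box in the derivation of Theorem~\ref{trifreeminorcor} for $p=9$; there is no proof of it in this paper to compare your proposal against.

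For what it is worth, your sketch follows the standard architecture for extremal-function results of this type (minimal counterexample, minimum-degree and connectivity reductions, then a case analysis around a vertex of small degree leveraging the $K_8$ theorem), which is indeed the shape of the argument in~\cite{SonThoK9}. But you should be aware that the actual proof there is long and the ``rerouting'' step you correctly flag as the main obstacle is handled by substantial machinery specific to that paper; what you have written is a plausible outline, not a proof, and several of the assertions (e.g.\ that completing a small separator to a clique preserves $K_9$-minor-freeness, or that the small-degree attachment cases are a finite mechanical check) hide real work. If your goal is to supply a proof for inclusion here, the correct move is simply to cite~\cite{SonThoK9}.
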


\noindent 
The first author and Zhu~\cite{ThoZhu} conjecture the following generalization.

\begin{conj}
\label{co:k10}
A graph on $V\ge9$ vertices with no $K_{10}$-minor has at most $8V - 36$ edges, unless it is  isomorphic to one of the following graphs:
\begin{enumerate}
\item[\rm(1)]
 a $(K_{1,1,2,2,2,2,2},7)$-cockade,
\item[\rm(2)] $K_{1,2,2,2,3,3}$,
\item[\rm(3)] $K_{2,2,2,2,2,3}$,
\item[\rm(4)] $K_{2,2,2,2,2,3}$ with an edge deleted,
\item[\rm(5)] $K_{2,3,3,3,3}$,
\item[\rm(6)] $K_{2,3,3,3,3}$ with an edge deleted,
\item[\rm(7)] $K_{2,2,3,3,4}$, and 
\item[\rm(8)] the graph obtained from the disjoint union of $K_{2,2,2,2}$ and $C_5$ by adding all edges joining them.
\end{enumerate}
\end{conj}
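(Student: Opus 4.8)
The natural line of attack is induction on $V:=|V(G)|$ via a minimal counterexample, exactly as Theorem~\ref{jorgensentheorem} was used to prove Theorem~\ref{songthomastheorem} and Mader's Theorem~\ref{madertheorem} to prove Theorem~\ref{jorgensentheorem}. So I would take $G$ with no $K_{10}$-minor, not isomorphic to any of (1)--(8), with $|E(G)|\ge 8V-35$, and with $V$ minimum and then $|E(G)|$ maximum; the finitely many small cases below the induction threshold are checked directly (by hand or computer) against the list. The first reduction aims at minimum degree at least $9$: a vertex $v$ with $\deg(v)\le 8$ gives $|E(G-v)|\ge 8(V-1)-35$, so by minimality $G-v$ is one of the exceptional graphs, and one then verifies that attaching a vertex of degree at most $8$ to a $(K_{1,2,2,2,2,2},6)$-cockade, to $K_{2,2,2,3,3}$, or to any of the graphs (1)--(8) either creates a $K_{10}$-minor, or reproduces a graph in the list, or drops the edge count to at most $8V-36$ --- a contradiction in every case.

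The second reduction is internal $8$-connectivity. Suppose $(A,B)$ is a separation of order $k\le 7$ with $A,B\ne V(G)$. If $G[A\cap B]$ is a clique, then by minimality each of $G[A]$, $G[B]$ obeys the bound or is exceptional, and combining the edge counts (as in the cockade analyses underlying Theorems~\ref{jorgensentheorem} and~\ref{songthomastheorem}) forces $|E(G)|\le 8V-36$ unless the pieces assemble into a $(K_{1,1,2,2,2,2,2},7)$-cockade --- contradiction. If $G[A\cap B]$ is not a clique, one either adds the missing edges inside $A\cap B$ (arguing that this cannot create a $K_{10}$-minor when $G$ is otherwise this sparse) and reduces to the clique case, or routes paths through the larger side to build a $K_{10}$-minor outright. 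The outcome is that $G$ is $8$-connected, with $\delta(G)\ge 9$, and in particular is not a cockade.

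The core of the proof is the reduction to the $K_9$ case: to produce a vertex $v$, or a single edge $e$, or a small connected set $S$, such that $G-v$ (resp. $G/e$, $G/S$) has \emph{no} $K_9$-minor. Given such an object, Theorem~\ref{songthomastheorem} bounds its edge count by $7V'-28$ with $V'\in\{V-1,V\}$, and since contracting $e=xy$ destroys at most $|N(x)\cap N(y)|+1$ edges while deleting $v$ destroys $\deg(v)\le V-1$ edges, the arithmetic returns $|E(G)|\le 8V-36$ --- unless the smaller graph is itself a $(K_{1,2,2,2,2,2},6)$-cockade or isomorphic to $K_{2,2,2,3,3}$, in which case, knowing precisely which dense graph $G$ reduces onto, together with $8$-connectivity and $\delta(G)\ge 9$, should pin $G$ down to one of (1)--(8) or expose a $K_{10}$-minor. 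To locate a good $v$ or $e$, the idea is that any $K_9$-minor that survives the operation must ``use up'' a non-neighbour of the deleted/contracted object in one of its branch sets --- otherwise that object extends the minor to a $K_{10}$-minor --- and $8$-connectivity combined with a degree-counting argument limits how many such $K_9$-minors can coexist, eventually yielding either the required vertex/edge or enough structure (e.g.\ a large clique minor) to produce $K_{10}$ directly.

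The main obstacle is exactly this last step, and in particular the seven sporadic exceptions (2)--(8). Unlike cockades, these do not arise from gluing along a clique, so there is no clean inductive shortcut onto them; worse, deleting or adding a single edge to several of them already creates a $K_{10}$-minor, so the inductive case analysis cannot treat them uniformly and must dispatch each with explicit, somewhat delicate minor constructions (the graph in (8), built from $K_{2,2,2,2}$ and $C_5$, being the most awkward, as are the ``edge-deleted'' variants (4) and (6)). Controlling this proliferation of near-extremal configurations --- rather than any single inequality --- is what makes Conjecture~\ref{co:k10} resistant, and a successful proof will likely need a sharper bookkeeping device than raw edge-counting to keep the analysis finite.
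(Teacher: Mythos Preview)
The statement you are attempting to prove is Conjecture~\ref{co:k10}, and the paper does \emph{not} prove it: it is explicitly presented as an open conjecture attributed to Thomas and Zhu. There is therefore no proof in the paper to compare your proposal against. The only use the paper makes of this conjecture is conditional --- it observes that Conjecture~\ref{co:k10} together with Theorem~\ref{trifreeminortheorem} would extend Theorem~\ref{trifreeminorcor} to $p=10$.

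Your write-up is not a proof but a plausible outline of how one might attack the conjecture, and you yourself identify the gap: the reduction step (finding a vertex, edge, or small set whose deletion or contraction leaves a graph with no $K_9$-minor) is asserted but not carried out, and the handling of the sporadic exceptions (2)--(8) is deferred to unspecified ``explicit, somewhat delicate minor constructions.'' These are precisely the places where the analogous proofs of Theorems~\ref{jorgensentheorem} and~\ref{songthomastheorem} require the most work, and there is no reason to expect the $K_{10}$ case to be easier; indeed, the larger exception list suggests it is harder. So what you have written is a reasonable heuristic roadmap, but it does not constitute a proof, and since the paper offers none either, there is nothing further to compare.
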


McCarty and the first author  studied the extremal functions for \emph{linklessly embeddable graphs}: graphs embeddable in 3-space such that no two disjoint cycles form a non-trivial link. Robertson, Seymour, and the first author~\cite{RobSeyThoSachs} showed that a graph is linklessly embeddable if and only if it has no minor isomorphic to a graph in the \emph{Petersen family}, which consists of the seven graphs (including the Petersen graph) that can be obtained from $K_6$ by $\Delta Y$- or $Y\Delta$-transformations.
Thus, Mader's theorem implies that a linklessly embeddable graph on $V$ vertices has at most  $4V-10$ edges.
McCarty and the first author~\cite{McCTho} proved the following.

\begin{theorem}
\label{biplinkltheorem}
A bipartite linklessly embeddable graph on $V \geq 5$ vertices has at most  $3V - 10$ edges, unless it is isomorphic to $K_{3, V-3}$.
\end{theorem}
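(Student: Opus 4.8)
The plan is to argue by induction on $V$, through a minimal counterexample. Suppose the statement fails, and choose a bipartite linklessly embeddable graph $G$ on $V\ge 5$ vertices with at least $3V-9$ edges and $G\not\cong K_{3,V-3}$, with $V$ as small as possible and, subject to that, with $|E(G)|$ as large as possible. Let $A,B$ be the two sides of a bipartition of $G$. The first thing I would record is that if one side, say $A$, had at most three vertices, then $|E(G)|\le 3|B|=3(V-|A|)\le 3V-9$ would force $|A|=3$ and $G\cong K_{3,V-3}$, contrary to hypothesis; hence $|A|,|B|\ge 4$ and $V\ge 8$. I would also use throughout that $G$ has no minor in the Petersen family, and in particular, since $K_{4,4}$ with one edge deleted lies in the Petersen family (it is obtained from $K_6$ by two $\Delta Y$-transformations), that $G$ has no $K_{4,4}$-minor.

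The preliminary reductions are the routine part. The small cases $V\le 10$ I would settle directly: for $V=8$ one has $G\subseteq K_{4,4}$ with at least $15$ edges, so $G$ contains $K_{4,4}$ minus an edge, which is impossible, and $V=9,10$ are similar (for $V=10$ with both sides of size $5$ one uses that $K_{5,5}$ with at most four edges deleted has a $K_6$-minor, obtained by grouping the ends of four independent edges into two branch sets). Next, a vertex $v$ of degree at most $2$ can be removed: $G-v$ is then a bipartite linklessly embeddable graph on $V-1\ge 5$ vertices with more than $3(V-1)-9$ edges and $G-v\not\cong K_{3,V-4}$, hence a smaller counterexample; so $\delta(G)\ge 3$. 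Then, if $G$ admits a separation of order at most $2$---and, with a little more care, of order at most $3$---I would split $G=G_1\cup G_2$ along it, replace each $G_{3-i}$ by a short bipartite ``gadget'' joining the cut vertices (a single edge if they lie in different sides, a common neighbour if they lie in the same side) so that the resulting graphs $G_i'$ are minors of $G$, hence bipartite and linklessly embeddable, and strictly smaller, and then apply the induction hypothesis to each; summing the two estimates gives $|E(G)|<3V-9$, with the exceptional summands $K_{3,\ast}$ only helping. So $G$ may be assumed $3$-connected, indeed $4$-connected.

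I would then show that $G$ is not apex. If $G-v$ is planar, with $v\in A$ say, then $G-v$ is bipartite and planar, so $|E(G-v)|\le 2(V-1)-4=2V-6$, while $\deg_G(v)\le|B|=V-|A|\le V-4$; adding these gives $|E(G)|\le 3V-10$, contradicting $|E(G)|\ge 3V-9$. (The same estimate also shows $G$ is not planar, a bipartite planar graph on $V\ge 6$ vertices having at most $2V-4<3V-9$ edges.)

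What remains is the heart of the proof, and the step I expect to be the main obstacle: deriving a contradiction from a $4$-connected bipartite linklessly embeddable graph $G$ that is neither planar nor apex, with both sides of the bipartition of size at least $4$ and with at least $3V-9$ edges, hence of average degree close to $6$. The approach I would take is to exhibit an explicit minor of $G$ in the Petersen family---ideally $K_6$ or $K_{4,4}$ minus an edge---by first using the large edge count and the high connectivity to locate a substructure resembling a large complete bipartite subgraph with only boundedly many cross-edges missing, and then contracting a short matching to repair those cross-edges, exactly as one produces a $K_6$-minor in $K_{5,5}$ minus a perfect matching. Making this work in general is the delicate point; I would expect to need a structural description of the $4$-connected linklessly embeddable graphs---which, apart from a short list of exceptions, are planar or apex---to pin down where the dense substructure lives and which contractions turn it into a Petersen-family graph.
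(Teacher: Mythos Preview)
This theorem is not proved in the present paper: it is quoted as a result of McCarty and the first author~\cite{McCTho}, and the paper gives no proof of it. So there is no ``paper's own proof'' to compare against here; Theorem~\ref{biplinkltheorem} serves only as background motivating Conjectures~\ref{trifreelinklconjecture}--\ref{bipminorconjecture}.

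As to your proposal on its own merits: the preliminary reductions (both sides of size at least~$4$, $\delta(G)\ge3$, high connectivity, non-apex) are reasonable and your small cases are plausible, but the decisive step is missing. You explicitly defer to ``a structural description of the $4$-connected linklessly embeddable graphs---which, apart from a short list of exceptions, are planar or apex''. No such theorem is known, and in fact it is false as stated: there are infinitely many $4$-connected linklessly embeddable graphs that are neither planar nor apex (for instance, suitable $4$-connected graphs built from apex pieces glued along $K_4$'s, or direct constructions from Y$\Delta$-closed families). So the endgame you sketch---locate a near-complete bipartite subgraph and contract a short matching to produce a Petersen-family minor---has no lemma to stand on, and the ad hoc contraction that works for $K_{5,5}$ minus a matching does not generalize without a genuine structural input. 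In short, the proposal reduces the theorem to a statement at least as hard as the theorem itself; the actual proof in~\cite{McCTho} proceeds by a different route.
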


\noindent
In the same paper McCarty and the first author  made the following three  conjectures.

\begin{conj}
\label{trifreelinklconjecture}
A triangle-free linklessly embeddatble graph on $V \geq 5$ vertices has at most  $3V - 10$ edges, unless it is isomorphic to $K_{3,V-3}$.
\end{conj}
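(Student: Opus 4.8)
My plan is to proceed by induction on $V:=|V(G)|$. The bipartite case is immediate from Theorem~\ref{biplinkltheorem}, and for $5\le V\le 7$ Mantel's theorem settles the claim: a triangle-free graph on that many vertices with more than $3V-10$ edges has exactly $\lfloor V^2/4\rfloor=3V-9$ edges, hence is isomorphic to $K_{3,V-3}$. So I may assume that $G$ is non-bipartite, that $V\ge 8$, and (for contradiction) that $|E(G)|\ge 3V-9$; I use freely that deletions and contractions preserve linkless embeddability, and that deletions preserve triangle-freeness.

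First I would reduce to $\delta(G)\ge 3$: if $v$ has degree at most $2$, then $G-v$ is triangle-free, linklessly embeddable, and has $V-1\ge 7$ vertices, so by the induction hypothesis either $|E(G-v)|\le 3(V-1)-10$, forcing $|E(G)|\le 3V-11$, or $G-v\cong K_{3,V-4}$; but a triangle-free extension of $K_{3,V-4}$ by a vertex of degree at most $2$ must place all of that vertex's edges inside one part of the bipartition, making $G$ bipartite --- a contradiction either way. I would likewise dispose of cut vertices, so assume $G$ is $2$-connected. Given a $2$-cut $\{a,b\}$ with sides $G_1,G_2$, each $G_i+ab$ is a minor of $G$ (contract an internal $a$--$b$ path of $G_{3-i}$ to a single edge and delete the rest of $G_{3-i}$), hence linklessly embeddable, and it is triangle-free when $a$ and $b$ have no common neighbour; applying induction to $G_1+ab$ and $G_2+ab$ --- and handling small sides and the exceptional graphs $K_{3,n_i-3}$ directly --- yields $|E(G)|\le 3V-15$, a contradiction. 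The case in which $a$ and $b$ have a common neighbour is handled in the same spirit. One then has to iterate such reductions to raise the connectivity to $4$, $5$, and $6$; this is the technically delicate part, because a triangle-free graph cannot be completed to a clique along a separator of size $\ge 3$, so the sides must be analysed as \emph{rooted} graphs with a bounded number of roots.

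In the surviving highly-connected case I would use that a linklessly embeddable graph has no $K_6$-minor, so that a sufficiently highly connected such graph is apex --- it has a vertex $v$ with $G-v$ planar --- either by the resolution of J\o rgensen's conjecture for large graphs (the finitely many small cases being checked directly) or by the sharper structural results known for linklessly embeddable graphs. Then $G$ is triangle-free and apex on $V\ge 7$ vertices, so our first main result, applied with $t=0$, gives $|E(G)|\le 3V-9$; hence equality holds, and it remains to classify the triangle-free apex graphs with exactly $3V-9$ edges and to verify that $K_{3,V-3}$ is the only linklessly embeddable one among them. Since $K_{3,V-3}$ is not highly connected, this contradicts the reduction and completes the induction.

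I expect the main obstacles to be the last two steps. First, the connectivity reduction in the triangle-free setting, where the standard device of completing a separator to a clique is unavailable, so each side must be treated as a rooted graph with a controlled boundary. Second, the highly-connected but non-apex graphs: there exist triangle-free linklessly embeddable graphs that are $4$-connected yet not apex --- for instance $C_5[2]$, the blow-up of a $5$-cycle in which each vertex is doubled, which is $4$-connected, triangle-free, linklessly embeddable, not apex, and attains $|E|=3V-10$ --- so one must either push the connectivity high enough that such graphs cannot occur, or else enumerate and dispose of them explicitly; this is entwined with the equality analysis needed to sharpen the apex bound from $3V-9$ to $3V-10$.
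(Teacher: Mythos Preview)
The statement you are attempting to prove is not a theorem in the paper: it is Conjecture~\ref{trifreelinklconjecture}, stated there as an open problem of McCarty and the first author. The paper does \emph{not} prove it; its contribution in this direction is only the apex special case, Theorem~\ref{apextheorem}. So there is no ``paper's own proof'' to compare against.

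As for your outline itself, it is a plausible high-level strategy, but the two places you flag as ``main obstacles'' are not merely technical---they are exactly why the conjecture is open. First, the connectivity reduction: for separators of size $\ge 3$ in a triangle-free graph you cannot complete the separator to a clique, and your suggestion to ``analyse the sides as rooted graphs with a bounded number of roots'' is not a lemma but an entire programme; nothing in the paper (or the literature you invoke) supplies the needed rooted extremal bounds. Second, the passage from high connectivity to apex: J\o rgensen's conjecture (that $6$-connected $K_6$-minor-free graphs are apex) is only known for sufficiently large graphs, and ``the finitely many small cases being checked directly'' hides an enormous and currently infeasible verification; moreover, linkless embeddability is strictly stronger than being $K_6$-minor-free, so even a full proof of J\o rgensen would not immediately give what you need without further work on the non-apex Petersen-family obstructions. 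Finally, even in the apex endgame you reach only $|E(G)|\le 3V-9$, not $3V-10$, and the equality classification you defer is itself nontrivial. In short, your sketch is a reasonable roadmap toward the conjecture, but it does not constitute a proof, and the paper makes no claim to one.
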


\noindent
As a possible approach to Conjecture~\ref{trifreelinklconjecture} McCarty and the first author proposed the following.

\begin{conj}
\label{trinumlinklconjecture}
A linklessly embeddable graph on $V \geq 7$ vertices with $t$ triangles has at most  $3V - 9 + t/3$ edges.
\end{conj}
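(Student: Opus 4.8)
\medskip

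We conclude with a proposed approach to Conjecture~\ref{trinumlinklconjecture}. The plan is to induct on $n := |V(G)|$ over linklessly embeddable graphs $G$ with $n \ge 7$; the base case $n = 7$ is a finite check, so assume $n \ge 8$, and assume $|E(G)| \ge 3n - 8$ since otherwise the bound is trivial. First dispose of easy configurations. If some vertex $v$ has degree $d$ and the number $s$ of triangles of $G$ through $v$ (equivalently, the number of edges of $G$ inside $N(v)$) satisfies $s \ge 3d - 9$, then $G - v$ is linklessly embeddable on $n - 1 \ge 7$ vertices, and the inductive bound for $G-v$ together with the $d$ restored edges and $s$ restored triangles gives the bound for $G$: indeed $3(n-1) - 9 + (t-s)/3 + d \le 3n - 9 + t/3$ is equivalent to $s \ge 3d - 9$, and in particular this removes any vertex of degree at most $3$. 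Next, if $G$ has a separation $(A, B)$ of order at most $3$ with $|A|, |B| \ge 7$, then $G[A]$ and $G[B]$ are linklessly embeddable, every triangle of $G$ is a triangle of $G[A]$ or of $G[B]$, and at most one is a triangle of both (only when $A \cap B$ induces a triangle, which also double-counts three edges); summing the two inductive bounds and making this correction gives the bound for $G$. Separations with a side on at most six vertices reduce to a low-degree vertex or to finitely many bounded configurations. After these reductions $G$ is $4$-connected, $|E(G)| \ge 3n - 8$, and no vertex of degree $d$ has $\ge 3d - 9$ edges inside its neighbourhood.

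If $G$ is planar, write $|E(G)| = 3n - 6 - k$ with $k \in \{0, 1, 2\}$; since $G$ is $4$-connected its faces are bounded by cycles, and Euler's formula with the edge--face incidence count shows $G$ has at most $k$ non-triangular faces, hence at least $2n - 4 - 2k$ triangles, which for $n \ge 7$ strictly exceeds $3(|E(G)| - 3n + 9) = 3(3 - k)$, so the bound holds. Thus we may assume $G$ is $4$-connected, nonplanar, and linklessly embeddable.

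The crux is a structure theorem that I would aim to establish: every $4$-connected nonplanar linklessly embeddable graph $G$ is either \emph{apex} --- there is a vertex $v$ with $G - v$ planar --- or isomorphic to a member of a short explicit list of sporadic graphs and parametrized families, the list including the complete tripartite graphs $K_{3,m,1}$ (which become a star $K_{1,m}$ after the part of size three is deleted). In the apex case, applying the apex theorem proved in this paper to $G$ and $v$ gives exactly $|E(G)| \le 3n - 9 + t/3$. In the remaining cases one checks the inequality by hand; for instance $K_{3,m,1}$ has $4m + 3$ edges, $m + 4$ vertices, and exactly $3m$ triangles, and the inequality holds with equality. This would complete the proof.

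The main obstacle is proving that structure theorem, i.e.\ identifying exactly which $4$-connected nonplanar linklessly embeddable graphs fail to be apex. This is a Petersen-family analogue of J\o rgensen's conjecture that $6$-connected $K_6$-minor-free graphs are apex, and should draw on the structural arguments behind the theorems of Mader, J\o rgensen, and Song and the first author for $K_7$, $K_8$, $K_9$. One route is to work directly with a flat embedding of $G$ in $3$-space and use the Robertson--Seymour--Thomas theory of such embeddings --- in particular the near-uniqueness of flat embeddings of highly connected graphs --- to constrain the planar regions and their shared boundaries. Another is to begin from the better-understood structure of $4$-connected $K_6$-minor-free graphs and then impose the exclusion of the remaining six Petersen-family graphs, using the hypotheses already in hand (nonplanarity, $|E(G)| \ge 3n - 8$, and no vertex of degree $d$ with $\ge 3d - 9$ neighbouring edges) to rule out the non-apex possibilities. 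Carrying either program through, and in particular compiling the finite list above, is where essentially all of the work lies.
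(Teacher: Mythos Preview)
The statement you are addressing is Conjecture~\ref{trinumlinklconjecture}, which the paper does \emph{not} prove; it is listed as an open problem, and the paper establishes only the apex special case (Theorem~\ref{apextheorem}). So there is no proof in the paper to compare your attempt against. Your write-up is, correspondingly and by your own admission, not a proof but a programme whose core ingredient---a structure theorem for $4$-connected nonplanar linklessly embeddable graphs---is left as a goal rather than established. The preliminary reductions (deleting a vertex $v$ with at least $3d(v)-9$ triangles through it, splitting along small separations, and disposing of the planar case via Euler's formula) are correct and standard, but they carry essentially no weight: as you say, the entire difficulty is pushed into the unproven structure theorem.

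There is also a concrete error in your sketch. The complete tripartite graph $K_{3,3,1}$ is itself one of the seven members of the Petersen family, so $K_{3,m,1}$ contains a Petersen-family subgraph for every $m\ge 3$ and is therefore \emph{not} linklessly embeddable. Thus your proposed infinite family of non-apex tight examples is vacuous in the range $V\ge 7$; the parenthetical remark that deleting the part of size three yields a star is in any case irrelevant to the apex property, which requires deleting a single vertex. More broadly, the hoped-for structure theorem is almost certainly too optimistic as stated: J\o rgensen's conjecture, your own analogy, needs $6$-connectivity to force the apex conclusion for $K_6$-minor-free graphs, and even that required major effort to resolve for large graphs. Expecting a comparable conclusion under mere $4$-connectivity for the full Petersen family, up to a ``short explicit list'', is a substantially stronger assertion for which there is no evidence. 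In short, the proposal identifies a plausible overall shape but neither supplies nor credibly reduces to a proof of the conjecture.
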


\noindent 
The third conjecture of McCarty and the first author  is as follows.

\begin{conj}
\label{bipminorconjecture}
For $p=2,3,\ldots,8$, a bipartite graph on $V \geq 2p-5$ vertices with no $K_p$-minor has at most  $(p-2)V - (p-2)^2$ edges.
\end{conj}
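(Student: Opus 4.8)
\noindent\emph{Proof proposal.}
I would prove the stronger statement in which ``bipartite'' is relaxed to ``triangle-free'': any upper bound valid for triangle-free graphs holds a fortiori for bipartite ones, and the extremal graph $K_{p-2,V-p+2}$ is itself bipartite, so nothing is lost; in fact the method should also reach $p=9$, giving statement~(2) of the abstract. Fix $p$ and induct on $V=|V(G)|$. For $2p-5\le V\le 2p-3$ the bound is immediate from Mantel's theorem, since a triangle-free graph on $V$ vertices has at most $\lfloor V^2/4\rfloor$ edges and $\lfloor V^2/4\rfloor\le (p-2)V-(p-2)^2$ throughout this range, with equality attained by a complete bipartite graph (the minor hypothesis is not even needed here). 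Now suppose $V\ge 2p-2$ and $p\ge 4$, the cases $p\le 3$ being trivial. If some vertex $v$ has degree at most $p-2$, then $G-v$ is triangle-free and $K_p$-minor-free on $V-1\ge 2p-3$ vertices, so $|E(G)|\le|E(G-v)|+(p-2)\le(p-2)V-(p-2)^2$. If $G$ is disconnected or has a cut vertex, split along the cut and add the two estimates, using Mantel on any side with fewer than $2p-5$ vertices; the inequality follows from $(p-2)^2\ge p-2$ together with $V\ge 2p-2$. Hence we may assume $G$ is $2$-connected with minimum degree at least $p-1$.

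Next I would eliminate $2$-cuts. Let $\{x,y\}$ be a $2$-cut, and write $G=G_1\cup G_2$ with $G_1\cap G_2=\{x,y\}$. If $xy\in E(G)$, each $G_i$ is triangle-free and $K_p$-minor-free, and induction applies. If $xy\notin E(G)$, replace each $G_i$ by the graph $G_i^+$ obtained by adding a new vertex $z_i$ adjacent only to $x$ and $y$; then $G_i^+$ is still triangle-free (because $xy\notin E(G)$), and it is still $K_p$-minor-free because it is a minor of $G$: the path through $z_i$ is realized by contracting, to length two, a path from $x$ to $y$ in $G_{3-i}$, which exists and has length at least two. Since $\delta(G)\ge p-1\ge 3$, each side of the $2$-cut has at least four vertices, so $|V(G_i^+)|\le V-1$ and induction (or, for a small piece, Mantel) applies to $G_i^+$; a short computation, in which the governing inequality reduces to $-(p-4)^2\le 0$, shows the bound for $G$ is preserved. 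So we may assume $G$ is $3$-connected.

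It remains to treat a $3$-connected, triangle-free, $K_p$-minor-free graph $G$ with $\delta(G)\ge p-1$ and $V\ge 2p-2$. Here I would invoke the exact extremal and structural theory of $K_p$-minor-free graphs for small $p$: Theorem~\ref{madertheorem} for $p\le 7$, Theorem~\ref{jorgensentheorem} for $p=8$, Theorem~\ref{songthomastheorem} for $p=9$, together with the classification of the extremal and sporadic graphs. A $3$-connected triangle-free graph is not an $(H,k)$-cockade with $k\ge 3$ (the glueing $K_k$ contains a triangle), and none of the sporadic exceptional graphs ($K_{2,2,2,2,2}$, $K_{1,2,2,2,2,2}$, $K_{2,2,2,3,3}$, and so on) is triangle-free; hence $G$ must be one of the ``basic'' pieces. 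I expect the only such pieces that could threaten the bound to be planar, apex over planar, or of bounded size, and for each of these we have a sufficiently strong estimate: a triangle-free planar graph has at most $2V-4$ edges; a triangle-free graph $G$ with $G-v$ planar and $V\ge 7$ has at most $3V-9$ edges (the triangle-free case of statement~(1) of the abstract); and the bounded pieces can be checked directly. One verifies that each of these quantities is at most $(p-2)V-(p-2)^2$ for $V\ge 2p-5$ and $p\le 9$, which gives the bound for the $3$-connected core; the reductions above then carry it back to $G$.

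The real obstacle is the appeal to structure in the last paragraph. Mader's theorem and its analogues do not suffice by themselves: their extremal graphs are obtained from $K_{p-2,V-p+2}$ by adding a clique on the $(p-2)$-side, so the $\binom{p-2}{2}$ edges one must still save are precisely those forbidden by triangle-freeness, and one has to exploit triangle-freeness inside the structural decomposition rather than merely to discard the cockade and sporadic exceptions. I expect the apex-planar estimate (statement~(1) of the abstract) to be an essential ingredient here---presumably this is why the paper establishes it first---and I expect the main effort to lie in organizing the clique-sum decomposition of $K_p$-minor-free graphs for $p=7,8,9$ finely enough to apply the planar and apex-planar bounds piece by piece, all while keeping $3$-connectivity and the hypothesis $V\ge 2p-5$ under control throughout.
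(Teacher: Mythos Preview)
Your opening reductions (Mantel for small $V$, deleting a vertex of degree $\le p-2$) match the paper, and your instinct to prove the triangle-free version is right. But the decisive step---handling the case $\delta(G)\ge p-1$---goes in the wrong direction, and your diagnosis of what the paper must be doing is mistaken on two counts.

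First, the apex bound (statement~(1) of the abstract) is \emph{not} used anywhere in the proof of statement~(2); the two results are independent. Second, there is no usable clique-sum or ``basic piece'' decomposition of $K_p$-minor-free graphs for $p=7,8,9$ of the sort you are reaching for; the sentence ``I expect the only such pieces that could threaten the bound to be planar, apex over planar, or of bounded size'' is simply not supported by any known structure theorem, and the programme you sketch in your last paragraph has no way to get off the ground.

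The paper's argument is entirely different and avoids structure theory. Once $\delta(G)\ge p-1$ and $V\ge 2p-3$, one shows that \emph{any two disjoint edges of $G$ induce a $4$-cycle}; this immediately forces $G$ to be complete bipartite, which is impossible for a counterexample. To prove the $4$-cycle claim, suppose disjoint edges $e_1,e_2$ have at most one edge between them. Extend to a matching $e_1,\dots,e_k$ with $k=\max\{p-4,2\}$ (possible since $\delta(G)>p-2$ and $G$ is triangle-free), and contract all $k$ edges. The number of parallel edges lost is at most $\binom{k}{2}-1$ (we saved one at the pair $e_1,e_2$), so the contracted graph $G'$ satisfies $|E(G')|>(p-2)|V(G')|-\binom{p-1}{2}$. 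By Theorems~\ref{madertheorem}, \ref{jorgensentheorem}, \ref{songthomastheorem} this forces $G'$ to be one of the exceptional graphs (a cockade or $K_{2,2,2,3,3}$); but those graphs always contain a triangle disjoint from any prescribed set of $k\le 5$ vertices, so they cannot arise by contracting $k$ edges of a triangle-free graph. That contradiction is the whole proof. The missing idea in your proposal is this contraction-of-a-matching trick, which converts the $\binom{p-2}{2}$ gap you correctly identified into exactly the $\binom{k}{2}$ parallel edges a matching contraction creates.
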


\subsection{Our results}
We first give a partial result to Conjectures \ref{trifreelinklconjecture} and \ref{trinumlinklconjecture}. An \emph{apex graph} is a graph $G$ with a vertex $a$ such that $G-a$ is planar. All apex graphs are linklessly embeddable. We show that Conjectures~\ref{trifreelinklconjecture} and \ref{trinumlinklconjecture} hold for apex graphs:
\begin{theorem} \label{apextheorem}
A triangle-free apex graph on $V \ge5$ vertices has at most $3V - 10$ edges, unless it is isomorphic to $K_{3, V-3}$. 
Moreover, an apex graph on $V \ge7$ vertices with $t$ triangles has at most  $3V - 9 + t/3$ edges.
\end{theorem}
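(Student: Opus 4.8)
\noindent
I would prove the second (triangle-counting) statement by a mostly self-contained counting argument, and then deduce the first statement together with its extremal characterisation. Write $H:=G-a$, $n:=V-1$, $d:=\deg_G(a)$, $S:=N_G(a)$, and $\bar S:=V(H)\setminus S$; let $e_S$, $\beta$, $\gamma$ be the numbers of edges of $H$ inside $S$, between $S$ and $\bar S$, and inside $\bar S$ respectively, so that $|E(H)|=e_S+\beta+\gamma$ and $|E(G)|=e_S+\beta+\gamma+d$. Every triangle of $G$ either avoids $a$ (and is then a triangle of $H$) or has the form $axy$ with $xy\in E(H[S])$, so $t=t(H)+e_S$. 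After the reductions below we may assume $H$ is connected on $n\ge 6$ vertices; then any planar embedding of $H$ has at least $2|E(H)|-4n+8$ triangular faces by Euler's formula, and since $H$ is connected with at least $4$ vertices, distinct triangular faces are distinct triangles of $H$, so $t(H)\ge 2|E(H)|-4n+8$ and hence $t=t(H)+e_S\ge 3e_S+2\beta+2\gamma-4n+8$. A short rearrangement then shows that the target inequality $|E(G)|\le 3V-9+t/3$ follows once we establish
\[
\beta+\gamma\ \le\ 5n-3d-10 .
\]

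\noindent
This last inequality is proved by a case analysis on the number $n-d$ of vertices of $H$ nonadjacent to $a$. If $n-d\ge 3$, then $\beta\le 2n-4$ because the bipartite subgraph of $H$ between $S$ and $\bar S$ is planar, and $\gamma\le 3(n-d)-6$ because $H[\bar S]$ is planar on $n-d$ vertices, and these add to exactly $5n-3d-10$. If $d=n$ or $d=n-1$ the required bound becomes $0\le 2n-10$ or $\deg_H(w)\le 2n-7$ (with $\{w\}=\bar S$), both true since $n\ge 6$. The one genuinely boundary case is $d=n-2$, say $\bar S=\{w_1,w_2\}$: here the crude degree bound only gives $\beta+\gamma\le 2n-3$, and $\beta+\gamma>2n-4$ can occur only when $w_1\sim w_2$ and both $w_i$ are adjacent to all of $S$. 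In that case the planar structure of $H$ forces $H[S]$ to be a subgraph of a path, so $e_S\le n-3$, and $H$ contains the $n-2$ triangles $w_1w_2s$ with $s\in S$; feeding the exact values $\beta=2(n-2)$, $\gamma=1$, and $t(H)=(n-2)+2e_S$ directly into $|E(G)|\le 3V-9+t/3$ verifies it for $n\ge 5$. This completes the proof of the second statement modulo the reductions.

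\noindent
For the reductions: if $G$ is disconnected or has a cut-vertex, one writes $G$ as a union of proper induced pieces (along components, or along a cut-vertex), each of which is again an apex graph, and checks that reassembling them preserves the inequality --- pieces on at least $7$ vertices obey the bound by induction on $|V(G)|$, and each piece on at most $6$ vertices contributes a nonnegative amount of ``slack'' when it is glued on, because a $2$-connected apex graph on $m\le 6$ vertices has at most $\binom{m}{2}\le 3m-3$ edges. Hence we may assume $G$ is $2$-connected, so that $H=G-a$ is connected on $n=V-1\ge 6$ vertices, as needed above. I expect the fiddliest parts of the whole argument to be exactly this reduction (in particular pinning down the small $2$-connected apex graphs and handling the case in which every block is small) and the boundary case $d=n-2$; the main counting inequality itself is quite robust.

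\noindent
Finally, for the first statement: a triangle-free apex graph $G$ on $V\ge 7$ vertices satisfies $|E(G)|\le 3V-9$ by the second statement with $t=0$, and for $V\in\{5,6\}$ the same bound follows from the Tur\'an bound $|E(G)|\le\lfloor V^2/4\rfloor$ and a direct check. Suppose $|E(G)|=3V-9$. Triangle-freeness makes $S$ independent in $H$, while $|E(H)|\le 2n-4$ (triangle-free planar) forces $d=|E(G)|-|E(H)|\ge n-2$. If $d=n$ then $H$ has no edges and $|E(G)|=n\ne 3V-9$; if $d=n-1$ then all edges of $H$ meet the single vertex of $\bar S$, giving $|E(G)|\le 2(n-1)$, which equals $3V-9$ only when $V=5$, where $G\cong K_{2,3}$. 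Otherwise $d=n-2$, $|E(H)|=2n-4$, and the two vertices $w_1,w_2$ of $\bar S$ are nonadjacent (else $w_1w_2s$ is a triangle for $s\in S$) and each adjacent to all of $S$ (else $|E(H)|<2n-4$); hence $G$ is the complete bipartite graph with parts $\{a,w_1,w_2\}$ and $S$, i.e.\ $G\cong K_{3,V-3}$. Consequently any triangle-free apex graph not isomorphic to $K_{3,V-3}$ has at most $3V-10$ edges.
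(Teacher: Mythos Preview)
Your core counting argument for the $2$-connected case is correct and genuinely different from the paper's. Where the paper introduces a refined invariant $\phi(G,a)$ tracking face sizes, proves a strengthened inequality by induction on $V+E$, and bounds $d(a)$ via a discharging argument, you instead bound $t(H)$ from below by the number of triangular faces (Euler), reduce to the clean target $\beta+\gamma\le 5n-3d-10$, and dispatch it by a short case split on $|\bar S|=n-d$. This is more direct; in particular your treatment of the boundary case $d=n-2$ is right (the edge $w_1w_2$ really does force $H[S]$ to sit inside a path rather than a cycle, so $e_S\le n-3$, and the explicit count then gives the inequality for $n\ge 5$).

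The genuine gap, which you flag yourself, is the reduction to $2$-connected $G$. Your gluing step is fine when at least one piece has $\ge 7$ vertices (so induction applies to it) and the other is small (your bound $|E(G_2)|\le\binom{m}{2}\le 3m-3$ does exactly what is needed there). But when $|V(G)|\ge 7$ and \emph{every} block has at most six vertices you have no anchor for the induction, and the crude bound $|E(B)|\le 3m-3$ is not strong enough on its own: with $k$ blocks you need $\sum_i(|E(B_i)|-t(B_i)/3-3m_i+9)\le 6(k-1)$, and this requires something like $|E(B)|-t(B)/3\le 2m-2$ for each small $2$-connected apex block $B$ (which combined with $\sum(m_i-1)=|V(G)|-1\ge 6$ would finish it). That sharper bound is true --- the worst cases are $K_2,K_3,K_4,K_5,K_{2,2,2}$ with values $1,\,8/3,\,14/3,\,20/3,\,28/3$ respectively --- but verifying it for $m=5,6$ is a nontrivial finite check you have not carried out. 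The paper sidesteps this entirely by building the small-$V$ deficit directly into its induction hypothesis via the correction term $\psi(V)/3=((7-V)^++(5-V)^+)/3$, so its cut-edge reduction goes through uniformly all the way down to $V=2$.

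For the triangle-free statement your equality analysis is correct and self-contained. Note, though, that you do not need to invoke the second statement for the bound: the inequality $|E|\le|E(H)|+d\le(2n-4)+d$ together with your own case analysis on $d\in\{n,n-1,n-2\}$ already gives $|E|\le 3V-9$ directly (this is exactly what the paper does in its triangle-free subsection), so the first statement can be made independent of the gap above.
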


\noindent 
Let us remark that the assumption that $V\ge7$ is necessary: let $G$ be the graph obtained from $K_6$
by deleting a perfect matching. 
Then $G$ has six vertices, $12$ edges and eight triangles; thus $|E(G)|=12\not\le 35/3=3V-9+t/3$.

Our second result proves a generalization of Conjecture \ref{bipminorconjecture} to triangle-free graphs for values of $p$ up to 9:
\begin{theorem} \label{trifreeminorcor}
For $p=2,3,\ldots,9$, a triangle-free graph with no $K_p$-minor on $V \geq 2p-5$ vertices has at most  $(p-2)V-(p-2)^2$ edges.
\end{theorem}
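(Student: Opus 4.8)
The plan is to fix $p$ and argue by minimal counterexample: take a triangle-free graph $G$ with no $K_p$-minor, with $V:=|V(G)|\ge 2p-5$ and $|E(G)|>(p-2)V-(p-2)^2$, chosen with $V$ smallest and then $|E(G)|$ largest. It is natural to prove at the same time that equality $|E(G)|=(p-2)V-(p-2)^2$ forces $G\cong K_{p-2,V-p+2}$, since that characterization is what lets the induction close. The cases $p\le4$ follow from the classical description of $K_p$-minor-free graphs and Mantel's theorem, so assume $p\ge5$. The base case exploits a numerical coincidence: when $V=2p-5$ one has $(p-2)V-(p-2)^2=(p-2)(p-3)=\lfloor(2p-5)^2/4\rfloor$, so the bound — and the identification of the extremal graph with the balanced complete bipartite graph, which is $K_{p-3,p-2}=K_{p-2,V-p+2}$ — is immediate from Mantel's theorem, with no minor hypothesis needed. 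Hence $V\ge2p-4$, and deleting a vertex of degree at most $p-2$ would give a strictly smaller counterexample, so $\delta(G)\ge p-1$. Deleting a vertex $v$ of minimum degree and applying the inductive bound to $G-v$ yields $|E(G)|\le(p-2)V-(p-2)^2-(p-2)+\delta(G)$; so if $\delta(G)=p-1$ then in fact $|E(G)|=(p-2)V-(p-2)^2+1$ and $G-v$ is extremal.

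The engine for the structural part is the elementary observation that a triangle-free graph of minimum degree $\delta$ has at least $2\delta$ vertices — for any $v$ and $u\in N(v)$ the sets $N(v)$ and $N(u)\setminus\{v\}$ are disjoint — with equality only for $K_{\delta,\delta}$. Using it I would force $G$ to be highly connected. If $S$ is a cutset of size $s$ with $G=G_1\cup G_2$ the associated separation, then $G_i-S$ is triangle-free of minimum degree at least $p-1-s$, so $|V(G_i)|\ge2(p-1-s)+s=2p-2-s$; when $s\le3$ both sides then have at least $2p-5$ vertices, and together with $|E(G)|\le|E(G_1)|+|E(G_2)|-|E(G[S])|$ and $s\le p-2$ this contradicts the choice of $G$, so $G$ is $4$-connected. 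Pushing the connectivity higher is similar, but one must handle the borderline separations in which $|V(G_i)|=2p-2-s$ exactly: there the equality case of the degree fact forces $G_i-S\cong K_{p-1-s,p-1-s}$, and since the neighbourhood in $G_i$ of each vertex of $S$ is independent it meets only one side of this complete bipartite graph, whence $|E(G_i)|-|E(G_i[S])|\le(p-1-s)(p-1)$ and one again obtains a contradiction by counting edges. This can be iterated far enough for all $p\le9$ (and for small $p$ a structure theorem such as Wagner's for $K_5$ shortcuts it), but the number of borderline configurations grows, and this bookkeeping — together with excluding the possibility $\delta(G)\ge p$, where the restriction $p\le9$ and the known structure of $K_p$-minor-free graphs (Theorems~\ref{madertheorem}, \ref{jorgensentheorem} and \ref{songthomastheorem}) are brought to bear — is where I expect most of the effort to go.

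It remains to treat the case $\delta(G)=p-1$, where $|E(G)|=(p-2)V-(p-2)^2+1$ and $G-v$ is extremal for a vertex $v$ of degree $p-1$; the induction hypothesis then gives $G-v\cong K_{p-2,V-p+1}$. Since $N(v)$ is an independent set of size $p-1>p-2$ in this graph, necessarily $V\ge2p-2$ and $N(v)$ lies entirely in the part of size $V-p+1$, so $v$ has no neighbour in the small part. Now a short explicit construction exhibits a $K_p$-minor in $G$: use the $p-2$ vertices of the small part, each paired with a distinct neighbour of $v$ in the large part (so each pair induces an edge and different pairs are adjacent through the small part), together with $\{v\}$ and one further neighbour of $v$ in the large part; these $p$ branch sets are pairwise adjacent. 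This contradiction finishes the proof. Besides the borderline-separation analysis, the steps that I expect to require the most care are establishing the extremal characterization simultaneously with the bound, so the induction is self-sustaining, and ruling out $\delta(G)\ge p$ in the highly connected case; the latter seems to me the main obstacle.
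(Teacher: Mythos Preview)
Your plan diverges substantially from the paper's, and the part you yourself flag as ``the main obstacle'' --- ruling out $\delta(G)\ge p$ --- is a genuine gap with no evident route to closure. Pushing the connectivity of a minimal counterexample up does not by itself yield a contradiction: for $p=8,9$ there is no structure theorem that converts high connectivity plus triangle-freeness into a $K_p$-minor, and Theorems~\ref{jorgensentheorem} and~\ref{songthomastheorem} only control the total edge count, which is too weak on its own (the gap between $(p-2)V-\binom{p-1}{2}$ and $(p-2)V-(p-2)^2$ is $\binom{p-2}{2}$). Moreover, your whole induction rests on the simultaneous \emph{characterization} of equality as $K_{p-2,V-p+2}$, so you must also establish that characterization in the regime $\delta\ge p-1$; your argument only treats the case $\delta=p-1$ via $G-v$, and the same obstacle reappears for the characterization when $\delta\ge p$.

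The paper sidesteps all of this with one clean device. After establishing $\delta(G)\ge p-1$ exactly as you do, it observes (from $\delta\ge p-1$ and triangle-freeness) that any $k\le p-2$ pairwise disjoint edges can be extended by one more. It then proves that \emph{any two disjoint edges of $G$ induce a $4$-cycle}: otherwise, extend the offending pair to $k=\max\{p-4,2\}$ pairwise disjoint edges and contract them all; an edge count shows the resulting $K_p$-minor-free graph has more than $(p-2)|V'|-\binom{p-1}{2}$ edges, and for $p=8,9$ one checks separately that a $(K_{2,2,2,2,2},5)$-cockade, a $(K_{1,2,2,2,2,2},6)$-cockade, or $K_{2,2,2,3,3}$ always contains a triangle avoiding any prescribed $k$ vertices, so cannot arise this way from a triangle-free graph --- contradicting Theorems~\ref{madertheorem}, \ref{jorgensentheorem}, \ref{songthomastheorem}. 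Once every pair of disjoint edges spans a $4$-cycle, $G$ is forced to be complete bipartite, and the bound (and the extremal graph) drop out immediately. This single contraction-and-count step replaces your open-ended connectivity analysis and is precisely where the hypothesis $p\le 9$ is used.
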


We prove Theorem~\ref{apextheorem} in Section~\ref{sec:apex} and Theorem~\ref{trifreeminorcor} in Section~\ref{sec:trfree}.

\section{Proof of Theorem \ref{apextheorem}}
\label{sec:apex}
For an integer $V$, 
by $V^+$ we denote $\max\{V,0\}$,  and we define $\psi(V):=(7-V)^++(5-V)^+$.
We need the following lemma.

\begin{lemma}
\label{lem:Vs}
Let $V_1,V_2\ge2$ be  integers, and let $V=V_1+V_2-1$.Then
$$\max\{\psi(V_1),1\}+\max\{\psi(V_2),1\}\le \psi(V)+10$$
with equality if and only if $V\le5$.
\end{lemma}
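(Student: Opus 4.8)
The plan is to reduce the two-variable inequality to a one-variable one by convexity. Write $f(V):=\max\{\psi(V),1\}$. Since $\psi(V)=(7-V)^+ +(5-V)^+$ takes the values $8,6,4,2,1,0$ at $V=2,3,4,5,6,\ge 7$, we get $f(2),f(3),f(4),f(5)=8,6,4,2$ and $f(V)=1$ for all integers $V\ge 6$. In particular the forward differences of $f$ on the integers $\ge 2$ form the sequence $-2,-2,-2,-1,0,0,\dots$, which is non-decreasing, so $f$ is convex there. This is the only place where one must verify that taking the pointwise maximum with the constant $1$ does not destroy convexity, and the difference sequence makes it immediate.

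Next I would push both arguments to the extreme. Assume $2\le V_1\le V_2$, so that $V=V_1+V_2-1\ge 3$. Replacing $(V_1,V_2)$ by $(V_1-1,V_2+1)$, which is legitimate as long as $V_1\ge 3$, changes $f(V_1)+f(V_2)$ by $[f(V_2+1)-f(V_2)]-[f(V_1)-f(V_1-1)]$, and this is $\ge 0$ because the forward differences of $f$ are non-decreasing and $V_2\ge V_1-1$. Iterating until the first argument equals $2$ yields
$$ f(V_1)+f(V_2)\le f(2)+f(V-1)=8+f(V-1). $$
Hence it suffices to prove $8+f(V-1)\le\psi(V)+10$, i.e.\ $f(V-1)\le\psi(V)+2$, for every integer $V\ge 3$. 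For $V\ge 8$ this reads $1\le 2$; for $3\le V\le 7$ the values of $f(V-1)$ and of $\psi(V)+2$ are $(8,6,4,2,1)$ and $(8,6,4,3,2)$ respectively, so the inequality holds, with equality exactly for $V\in\{3,4,5\}$.

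Finally I would handle the equality statement. From the displayed chain, equality in the lemma forces $f(V-1)=\psi(V)+2$, hence $V\in\{3,4,5\}$, that is, $V\le 5$ (recall $V\ge 3$ always holds). Conversely, if $V\le 5$ then every admissible pair has $V_1,V_2\in\{2,3,4\}$, where $f$ is affine, $f(k)=12-2k$; therefore $f(V_1)+f(V_2)=24-2(V_1+V_2)=22-2V$, and one checks $22-2V=\psi(V)+10$ for $V=3,4,5$. So equality holds precisely when $V\le 5$. There is no substantial obstacle in this lemma; the only care required is bookkeeping the finitely many cases and keeping both directions of the equality claim straight.
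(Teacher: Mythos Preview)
Your proof is correct.  Your approach differs from the paper's: the paper proceeds by direct case analysis (first the case $V_1,V_2\le 5$, split further into $V\ge 6$ and $V\le 5$; then the case where some $V_i\ge 6$), computing the two sides explicitly in each case.  You instead observe that $f(V)=\max\{\psi(V),1\}$ has non-decreasing forward differences, and use a smoothing step to push $(V_1,V_2)$ to the extreme configuration $(2,V-1)$, reducing the lemma to a one-variable check.  Your route is slightly more conceptual and would scale better if the function $\psi$ were more complicated, while the paper's argument avoids the need to verify convexity and is entirely elementary bookkeeping; for a lemma of this size either is perfectly adequate.
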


\begin{proof}
Assume first that both $V_1,V_2$ are at most five. If $V\ge6$, then $V_1+V_2\ge7$ and we have 
$$\max\{\psi(V_1),1\}+\max\{\psi(V_2),1\}=\psi(V_1)+\psi(V_2)=7-V-1+17-(V_1+V_2)\le \psi(V)+9.$$
If $V\le5$, then
$$\max\{\psi(V_1),1\}+\max\{\psi(V_2),1\}=\psi(V_1)+\psi(V_2)=7-V-1+5-V-1+12= \psi(V)+10.$$

We may therefore assume  that say $ V_2\ge6$. Then
\begin{align*}
\max\{\psi(V_1),1\}+\max\{\psi(V_2),1\}&=\max\{(5-V_1)^++(7-V_1)^+,1\}+\max\{(7-V_2)^+,1\}\\
&\le 3+5+1=9,
\end{align*}
%
as desired.
\end{proof}

Let $G$ be an apex graph on $V $ vertices and $E$ edges  with a vertex $a$ such that $G-a$ is planar. 
Let $G^\circ  := G-a$ be embedded in the plane, and let $V^\circ:=|V(G^\circ)|$ and $E^\circ:=|E(G^\circ)|$. Note that $V = V^\circ + 1$ and $E = E^\circ + d(a)$. 

\subsection{Triangle-free case}
First suppose that $G$ is triangle-free and that  $V\ge5$. Then $N(a)$ is an independent set. As $G^\circ$ is triangle-free, planar and has 
at least three vertices, it follows from Euler's formula that $E^\circ \leq 2V^\circ - 4$, so
\begin{align*}
E &= E^\circ + d(a)
\leq 2V^\circ - 4 + d(a) 
= 2V-6 + d(a)
\end{align*}
If $d(a) \leq V-4$, then we are done. As $d(a) \leq V-1$, we just need to check 3 cases:
\begin{enumerate}
	\item $d(a)=V-1$.
	As $N(a)$ is independent, $G^\circ$ is the empty graph on $V-1$ vertices, so $E = V-1\le3V-10$,
since $V\ge5$.

	\item $d(a)=V-2$.
	Then $a$ is adjacent to all but one vertex $u$ in $G^\circ$. Since $d(u) \leq V-2$ and $N(a)$ is independent, it follows that 
$E \leq 2V-4\le3V-10$, unless $V=5$, in which case $E\le3V-10$, except when $G$ is isomorphic to $K_{2,3}$, as desired.

	\item $d(a)=V-3$.
	Then $a$ is adjacent to all but two vertices $u,v$ in $G^\circ$. Since $G^\circ$ is triangle-free and $N(a)$ is independent, if 
$u$ is adjacent to $v$, then $E^\circ \leq V^\circ - 1$, in which case $E=E^\circ+V-3\le 2V-5\le3V-10$, since $V\ge5$; 
and if $u$ is not adjacent to $v$, then $E^\circ \leq 2(V^\circ-2)$, in which case $E\le E^\circ+V-3\le 3V-9$, 
with equality  if and only if $G^\circ $ is isomorphic to $ K_{2,V-3}$ and $G$ is isomorphic to $ K_{3,V-3}$. 
\end{enumerate}
Therefore $E \leq 3V-10$, unless $G$ is isomorphic to $ K_{3,V-3}$, as desired.

\subsection{General case}
Now suppose that $G$ has 
$t$ triangles.  Let $t^\circ$ denote the number of triangular faces of $G^\circ$ and let $t_a$ denote the number of triangles of $G$ incident with  $a$. Let $t' = t^\circ + t_a$. Since $t' \leq t$, it would suffice to show that
\begin{align} \label{t'inequality}
E \leq 3V-9+t'/3
\end{align}
However, this inequality does not always hold. Consider a graph $G$ obtained from $K_{3,V-3}$ with bipartition $(\{a,b,c\},\{v_1,\dots,v_{V-3}\})$ by adding the edge $bc$ and any subset of the edges $\{v_1v_2,v_2v_3,\allowbreak \dots,\allowbreak v_{V-4}v_{V-3}\}$. This gives an apex graph,
where $G-a$ is planar, with $E = 3V-9+t'/3 + 1/3$, violating the inequality (\ref{t'inequality}). Let us call any graph isomorphic to  such a graph {\em exceptional}.
What we will show is that every graph $G$ on at least seven vertices satisfies (\ref{t'inequality}), unless $G$ is exceptional.
Note that this proves Theorem \ref{apextheorem}, since an exceptional  graph 
has at least two  triangles which are not counted in~$t'$, and hence satisfies the inequality in Theorem \ref{apextheorem}.

In fact, we prove a stronger statement, and for the sake of the inductive argument we allow graphs on fewer than seven vertices. 
Let $\mcf$ denote the set of faces of $G^\circ$. Define
\begin{align*}
\phi(G,a) :&= \frac{t_a}{3} - \sum_{f \in \mcf} \frac{|f|-4}{3}
= \frac{t_a}{3} + \frac{t^\circ}{3} - \sum_{\substack{f\in \mcf \\ |f| \geq 5}} \frac{|f|-4}{3}
\leq \frac{t'}{3}.
\end{align*}

\noindent 
We prove the following:
\begin{theorem}
Let $G,a,V,E$  be as before. and let $V\ge2$. Then
\begin{itemize}
\item[\rm(1)] if $G$ is exceptional, then $E = 3V-9+\phi(G,a)+1/3$.
\end{itemize}
Otherwise
\begin{itemize}
\item[\rm(2)] $E \leq 3V-9+\phi(G,a)+\psi(V)/3$,
\item[\rm(3)] if $G-a$ has at least one non-neighbour of $a$, then $E \leq 3V-9+\phi(G,a)+(7-V)^+/3$, and 
\item[\rm(4)] if $G-a$ has at least two non-neighbours of $a$, then $E \leq 3V-9+\phi(G,a)$.
\end{itemize}
\end{theorem}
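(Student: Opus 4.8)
The plan is to prove (1)--(4) simultaneously by induction on $V$. It helps to rewrite $\phi$ in closed form: Euler's formula for the plane graph $G^\circ$ gives $F=E^\circ-V^\circ+1+c$ with $c$ the number of components of $G^\circ$, and since $\sum_{f\in\mcf}|f|=2E^\circ$ we get $\sum_{f\in\mcf}(|f|-4)=-2E^\circ+4V^\circ-4-4c$, hence
\[
\phi(G,a)=\tfrac13\bigl(t_a+2E^\circ-4V^\circ+4c+4\bigr),
\]
which in particular is independent of the embedding. Writing $k:=V^\circ-d(a)$ for the number of non-neighbours of $a$ and noting that $t_a=e\bigl(G^\circ[N(a)]\bigr)$ (triangles through $a$ are exactly edges of $G^\circ$ inside $N(a)$), a short manipulation shows that (4) is equivalent to
\[
E^\circ-t_a\ \le\ 2\,d(a)+5k+4c-14,
\]
and that (3), (2) are obtained by adding $(7-V)^+/3$, respectively $\psi(V)/3$, to the right side of the bound on $E$.

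First I would dispose of reducible configurations. Isolated vertices of $G$ other than $a$ are deleted directly. If $G^\circ$ is disconnected, write $G=G_1\cup G_2$ with $G_1\cap G_2=\{a\}$; if $G$ has a cut-vertex, split there. In either case $V=V_1+V_2-1$, $\phi$ is additive over the split up to a bounded Euler defect, and the slack the inductive hypothesis supplies for a part $G_i$ is at most $\max\{\psi(V_i),1\}/3$, the ``$1$'' absorbing the possibility that $G_i$ is exceptional. Lemma~\ref{lem:Vs} then caps the combined slack by $(\psi(V)+10)/3$, which is exactly what is needed to recover (2); to recover (3) and (4) one uses that, after distributing the non-neighbours of $a$ among $G_1,G_2$ (a shared cut-vertex being a non-neighbour in both pieces when it is a non-neighbour of $a$), at least one part admits the stronger estimate. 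After these reductions I may assume $G$ and $G^\circ$ are connected and $G$ is $2$-connected.

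The heart of the argument is then a direct count. Let $S$ be the set of non-neighbours of $a$ in $G^\circ$, let $N=N(a)$, and let $T\subseteq N$ be the vertices of $N$ with a neighbour in $S$. Every edge of $G^\circ$ lies inside $N$ (these are the $t_a$ edges), inside $S$, or between $S$ and $T$, so $E^\circ-t_a=e\bigl(G^\circ[S]\bigr)+e_{G^\circ}(S,T)$; since $G^\circ[S]$ and the bipartite graph between $S$ and $T$ are simple planar, $e(G^\circ[S])\le\max\{3k-6,0\}$ (and $\le1$ when $k=2$) and $e_{G^\circ}(S,T)\le 2(k+|T|)-4\le 2(k+d(a))-4$ whenever $k+|T|\ge3$. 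Now case on $k$:
\begin{itemize}
\item $k\ge3$: adding the two bounds gives $E^\circ-t_a\le 5k+2d(a)-10\le 2d(a)+5k+4c-14$, so (4) holds (hence (2), (3)).
\item $k=1$: here $E^\circ-t_a$ is the $G^\circ$-degree of the unique non-neighbour, which is $\le d(a)$, and using $V=d(a)+2$ this lies within the bound of (3).
\item $k=0$: then $t_a=E^\circ$, so $E^\circ-t_a=0$ and (2) reduces to $\psi(V)\ge 16-4c-2V$, which holds --- with equality (for $c=1$) precisely when $V\le5$, matching the equality clause of Lemma~\ref{lem:Vs}.
\item $k=2$, say $S=\{u,w\}$: then $E^\circ-t_a\le[uw\in E]+2|T|\le 1+2d(a)$, while (4) asks for $E^\circ-t_a\le 2d(a)$ when $c=1$. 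Thus (4) can fail only if $c=1$, $uw\in E$, $T=N$, and the bipartite graph between $\{u,w\}$ and $N$ has $2d(a)$ edges, i.e.\ equals $K_{2,d(a)}$; then $G^\circ=K_{2,d(a)}+uw+M$ for some set $M$ of edges inside $N$, planarity forces $M$ to be a subset of the edges of a path on $N$, so $G$ is exceptional and (1) holds by the computation $E=3V-8+|M|=3V-9+\phi+\tfrac13$. If $G$ is not exceptional then $E^\circ-t_a\le 2d(a)$ and (4) holds; and if $c\ge2$ then $1+2d(a)\le 2d(a)+5\cdot2+4c-14$ directly.
\end{itemize}

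The step I expect to fight hardest with is the $k=2$ case together with the exceptional-graph bookkeeping: one must show that equality in the naive count really does pin down the structure of an exceptional graph rather than some other near-extremal configuration, and separately verify that every exceptional graph satisfies (1) with equality. A more clerical difficulty is making sure the correct alternative among (2)--(4) survives the separation step for every range of $V$; this is exactly where the constant $10$ and the $\max\{\cdot,1\}$ in Lemma~\ref{lem:Vs} are used, and where the handful of small cases $V\le6$ must be checked individually.
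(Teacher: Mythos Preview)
Your approach is correct and takes a genuinely different route from the paper's. The paper argues by induction on $V+E$: through four reduction claims it arranges that $G^\circ$ is bridgeless with no isolated vertices, that $N(a)$ is independent (so $t_a=0$), and that every vertex of $N(a)$ has $G^\circ$-degree at least~$3$; a short discharging argument then yields $d(a)\le\sum_{f\in\mcf}(|f|-2)/3$, and Euler's formula finishes. You instead pass to the closed form $\phi=(t_a+2E^\circ-4V^\circ+4c+4)/3$, recast (4) as $E^\circ-t_a\le 2d(a)+5k+4c-14$, and bound the left side directly by splitting the edges of $G^\circ$ outside $N(a)$ into those inside $S$ (at most $3k-6$ by planarity) and those between $S$ and $T$ (at most $2(k+|T|)-4$ by bipartite planarity). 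This is more elementary---no discharging, and in fact no induction is needed: your direct count already succeeds for every $c\ge1$, so the separation step you sketch (and flag as the main clerical difficulty) can simply be dropped, and with it the appeal to Lemma~\ref{lem:Vs} and the small-$V$ case-checking. Your $k=2$ analysis correctly identifies the exceptional graphs via the essentially unique planar embedding of $K_{2,d(a)}+uw$, and the computation $E=3V-8+|M|=3V-9+\phi+1/3$ verifies (1). What the paper's route buys is a sequence of structural claims (bridgelessness, independence of $N(a)$, minimum degree) that may be reusable elsewhere, and discharging tends to adapt well to variants; what yours buys is a shorter, self-contained argument that avoids the inductive bookkeeping altogether.
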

\begin{proof}
%
We proceed by induction on $V+E$. If $V=2$ and $E=0$, then
$$E=0=6-9+4/3+(7-2)/3=3V-9+\phi(G,a)+(7-V)^+/3.$$
If $V=2$ and $E=1$, then
$$E=1=6-9+4/3+(7-2)/3+(5-2)/3=3V-9+\phi(G,a)+\psi(V)/3.$$
We may therefore assume that $V\ge3$ and that the theorem holds for all graphs 
$G'$ with $|V(G')|+|E(G')|<V+E$.
We suppose for a contradiction that the theorem does not hold for $G$.
It follows that $G$ is not exceptional, because exceptional graphs satisfy the theorem.
Let $G^\circ:=G-a$, $V^\circ$ and $E^\circ$ be   as before.

\begin{claim} \label{bridgelessclaim}
The graph 
$G^\circ$ has no cut-edges.
\end{claim}
\begin{proof}
Suppose $e=xy$ is a cut-edge of $G^\circ$ incident with  a face $f_e$.
Let $C_1$ be the connected component of $G^\circ - e$ containing $x$, and let $C_2 = G^\circ -V( C_1)$. 
Define $G_i:= G[V(C_i) \cup \{a\}]$,  $V_i:=|V(G_i)|$ and $E_i:=|E(G_i)|$ for $i=1,2$.  Let $\mcf_i$ denote the set of faces of $C_i$, let 
$f_i$ denote the face of $C_i$ that contains $f_e$, and let $t_{a,i}$ denote the number of triangles incident with $a$ in $G_i$ for $i=1,2$. 
Then  $V=V_1+V_2-1$, $|f_e|=|f_1|+|f_2| + 2$, $\mcf = ((\mcf_1 \cup \mcf_2) \setminus \{f_1,f_2\}) \cup \{f_e\}$, 
and $t_{a,1}+t_{a,2} \leq t_a-\epsilon$, where $\epsilon =1$ if $a$ is adjacent to every vertex of $G-a$ and $\epsilon =0$ otherwise, and so
\begin{align*}
\phi(G_1,a)+\phi(G_2,a) &= \frac{t_{a,1}+t_{a,2}}{3} - \sum_{f \in \mcf_1}\frac{|f|-4}{3} - \sum_{f \in \mcf_2}\frac{|f|-4}{3} \\
&\leq \frac{t_a}{3} -\frac\epsilon 3- \left(\sum_{f \in \mcf}\frac{|f|-4}{3}\right) + \frac{|f_e|-4}{3} - \frac{|f_1|+|f_2|-8}{3}\\
&= \phi(G,a)+2-\epsilon/3.
\end{align*}
By Lemma~\ref{lem:Vs} $\max\{\psi(V_1),1\}+\max\{\psi(V_1),1\}\le\psi(V)+10$, with equality if and only if $V\le5$.
Note that $E=E_1+E_2+1$. 
By the induction  hypothesis each $G_i$ satisfies 
$$E_i \leq 3V_i - 9 + \phi(G_i,a) + {\max\{\psi(V_i),1\}}/{3},$$
where for $V_i\le4$ equality holds only if $a$  is adjacent to every vertex of $G_i-a $;
 thus 
\begin{align*}
E &= E_1+E_2+1 \\
&\leq 3(V_1+V_2) - 18 + \phi(G_1,a)+\phi(G_2,a) +(\max\{\psi(V_1),1\}+\max\{\psi(V_2),1\}) /3 +1\\
&\leq 3V - 15 + \phi(G,a)+3+(\psi(V)+10-\epsilon )/3. 
\end{align*} 
It follows that $E\le3V-9 +\phi(G,a)+\psi(V)/3$, a contradiction,  because if equality holds in the two inequalities above, then 
$V\le5$, which implies that $V_1,V_2\le4$, and hence $a$ is adjacent to every vertex of $G-a$, and consequently $\epsilon =1$.
This proves the claim in the case when either $V\ge7$  or $a$ is adjacent to every vertex of $G-a$.

We may therefore assume that $V\le6$ and that $a$ is not adjacent to every vertex of $G-a $.
Assume next that $a$ is adjacent to all but one vertex of $G-a $. By the symmetry we may assume that $a$  
is adjacent to every vertex of $G_1-a$ and all but one vertex of $G_2-a$.
Then
$$\psi(V_1)+(7-V_2)^+=7-V-1+12-V_1\le(7-V)^++9,$$
and hence 
\begin{align*}
E &= E_1+E_2+1 \\
&\leq 3(V_1+V_2) - 18 + \phi(G_1,a)+\phi(G_2,a) +(\psi(V_1)+(7-V_2)^+) /3 +1\\
&\leq 3V - 15 + \phi(G,a)+3+((7-V)^++9)/3 \\
&\le 3V-9 +\phi(G,a)+(7-V)^+/3,
\end{align*} 
a contradiction.

We may therefore assume that $a$ is not adjacent to at least two vertices of $G-a $.
Assume next that $a$ is not adjacent to at least two vertices vertices of $G_2-a$. Then 
\begin{align*}
E &= E_1+E_2+1 \\
&\leq 3(V_1+V_2) - 18 + \phi(G_1,a)+\phi(G_2,a) +\psi(V_1) /3 +1\\
&\leq 3V - 15 + \phi(G,a)+3+8/3 \\
&\le 3V-9 +\phi(G,a),
\end{align*} 
a contradiction.

We may therefore assume that $a$ is not adjacent to exactly one  vertex of $G_i-a $ for $i=1,2$. We have 
\begin{align*}
E &= E_1+E_2+1 \\
&\leq 3(V_1+V_2) - 18 + \phi(G_1,a)+\phi(G_2,a) +((7-V_1)^++(7-V_2)^+) /3+1 \\
&\leq 3V - 15 + \phi(G,a)+3+10/3 \\
&= 3V-9 +\phi(G,a)+1/3,
\end{align*} 
with equality if and only if $V_1=V_2=2$, in which case $G$ is exceptional, in either case a contradiction.
Thus the claim holds.
\end{proof}

\begin{claim} \label{naindepclaim}
We have that 
$t_a=0$; that is, $N(a)$ is independent. In particular, $\phi(G,a) = \sum_{f\in \mcf}\frac{4-|f|}{3}$.
\end{claim}
\begin{proof}
Suppose there exist adjacent vertices $x,y \in N(a)$ 
As $xy$ is not a cut-edge of $G^\circ$ by Claim~\ref{bridgelessclaim}, it is incident with two distinct faces $f_1,f_2$. Let $G'  := G - xy$,
let $E':=|E(G')|$ and let $f'$ be the new face obtained in $G^\circ-xy$. Let $\mcf'$ denote the set of faces of $G^\circ-xy$ and let $t'_a$ denote the number of triangles of $G'$ incident with  $a$. Then $t_a' = t_a-1$, $|f_1|+|f_2| = |f'|+2$, and $\mcf = (\mcf' \setminus \{f'\})\cup \{f_1,f_2\}$, so
\begin{align*}
\phi(G',a) &= \frac{t'_a}{3} - \sum_{f \in \mcf'}\frac{|f|-4}{3} \\
&= \frac{t_a}{3} - \frac{1}{3} - \left(\sum_{f \in \mcf}\frac{|f|-4}{3}\right) + \frac{|f_1|+|f_2|-8}{3} - \frac{|f'|-4}{3} \\
&= \phi(G,a) - 1
\end{align*}

Since $G$ does not satisfy  the theorem, it is not exceptional, and 
so neither is $G'$. Let $x:=\psi(V)$ if $a$ is adjacent to every vertex of $G-a $,
let $x:=(7-V)^+$ if $a$ is adjacent to all but one vertex of $G-a$  and let $x:=0$ otherwise. By the induction hypothesis
\begin{align*}
E &= E' + 1 \\
&\leq 3V - 9 + \phi(G',a) + 1 +x\\
&= 3V - 9 + \phi(G,a)+x,
\end{align*}
a contradiction. 
\end{proof}

\begin{claim}
\label{cl:noisolate}
The graph $G^\circ$ has no isolated vertices.
\end{claim}

\begin{proof}
Suppose for a contradiction that $v$ is an isolated vertex of $G^\circ$.
Let $G' = G-v$, let $V':=|V(G')|$ and  let $E'=|E(G')|$.
Then $\phi(G',a) = \phi(G,a)$.
Let $x':=\psi(V')$ and $x:=\psi(V)$ if $a$ is adjacent to every vertex of $G-a $,
let $x':=(7-V')^+$ and $x:=(7-V)^+$ if $a$ is adjacent to all but one vertex of $G-a$  and let $x=x':=0$ otherwise. If $v$ is adjacent to $a$,
then  by the induction hypothesis
\begin{align*}
E &= E' + 1 \\
&\leq 3V' - 9 + \phi(G',a) + 1 +\max\{x',1\}/3\\
&\le 3V-3 - 9 + \phi(G,a)+1+x/3+2/3\\
&\le 3V - 9 + \phi(G,a)+x/3,
\end{align*}
and if $v$ is not adjacent to $a$,
then
\begin{align*}
E &= E' \leq 3V' - 9 + \phi(G',a)  +\max\{\psi(V'),1\}/3\\
&\le 3V -3- 9 + \phi(G,a)+8/3\\
&\le 3V - 9 + \phi(G,a),
\end{align*}
a contradiction in either case. 
\end{proof}

\begin{claim} \label{nadegclaim}
If $v \in N(a)$, then $v$ has at least three neighbours in $G^\circ$; that is, $d(v) \geq 4$.
\end{claim}
\begin{proof}
Since $G^\circ$ has no cut-edges by Claim~\ref{bridgelessclaim} and no isolated vertices by Claim~\ref{cl:noisolate}, 
$v$ has at least two neighbours in $G^\circ$. Suppose it has exactly two neighbours, and let $f_1,f_2$ be the two faces of $G^\circ$ incident to $v$. Let $G' = G-v$, let $V':=|V(G')|$, let $E'=|E(G')|$ and let $f'$ denote the new face in $G^\circ - v$. 
Then $|f_1|+|f_2|=|f'|+4$, and
\begin{align*}
\phi(G',a) &= -\sum_{f \in \mcf'} \frac{|f|-4}{3} \\
&= -\left(\sum_{f \in \mcf} \frac{|f|-4}{3}\right) + \frac{|f_1|+|f_2|-8}{3} - \frac{|f'|-4}{3} \\&= \phi(G,a)
\end{align*}
As $G$ does not satisfy the theorem, it is not exceptional, and hence neither is $G'$.
Furthermore, the neighbours of $v$ are not adjacent to $a$ by Claim~\ref{naindepclaim}, and so by the induction hypothesis
\begin{align*}
E &= E'+3 \\
&\leq 3V' - 9 + \phi(G',a) + 3\\
&= 3V - 9 + \phi(G,a),
\end{align*}
a contradiction.
\end{proof}

We now show an upper bound on the degree of $a$ by a simple discharging argument. Start by assigning a charge of one to each vertex in $N(a)$, and for each $v \in N(a)$ distribute its charge equally to its incident faces in $G^\circ$. Then the sum of the charges of the faces of $G^\circ$ is equal to $d(a)$.

By Claim \ref{nadegclaim}, each $v \in N(a)$ is incident to at least three  faces, so it gives at most 1/3 charge to each incident face. By Claim \ref{naindepclaim}, each face $f \in \mcf$ is incident to at most $\lfloor |f|/2\rfloor$ neighbours of~$a$. Thus the final charge of face $f$ is at most $\lfloor |f|/2\rfloor/3$, and
\begin{align*}
d(a) \leq \sum_{f \in \mcf} \frac{\lfloor |f|/2\rfloor}{3}
\end{align*}
Since $\lfloor k/2 \rfloor \leq k-2$ for all $k\geq 3$,
\begin{align} \label{dabound}
d(a) \leq \sum_{f \in \mcf} \frac{|f|-2}{3}
\end{align}

The remainder of the proof follows from arithmetic using Euler's formula. Let $F^\circ$ denote the number of faces of $G^\circ$.
By the handshaking lemma, we have $2E^\circ = \sum_{f \in \mcf} |f|$. Since $F^\circ = \sum_{f \in \mcf} 1$, by Euler's formula:
\begin{align*}
8 &\le 4V^\circ - 4E^\circ + 4F^\circ \\ 
&= 4V^\circ - 2E^\circ - \sum_{f \in \mcf} (|f| - 4)
\end{align*} 
Rearranging, we have
\begin{align}\label{euler1}
E^\circ \le 2V^\circ - 4  - \sum_{f \in \mcf} \frac{|f|-4}{2}
\end{align}
Similarly, we have $3F^\circ \leq 2E^\circ \le 2V^\circ + 2F^\circ - 4$, which gives
\begin{align} \label{euler2}
F^\circ \leq 2V^\circ - 4.
\end{align}
Putting (\ref{dabound}), (\ref{euler1}), and (\ref{euler2}) together, we have
\begin{align*}
E &= E^\circ + d(a) \\
&\leq V^\circ + F^\circ - 2 + \sum_{f\in \mcf} \frac{|f|-2}{3} \\
&= V^\circ + \frac{1}{3}F^\circ + \frac{2}{3}E^\circ - 2 \\
&\leq V^\circ + \frac{1}{3}(2V^\circ - 4) + \frac{2}{3}\left(2V^\circ - 4 - \sum_{f \in \mcf}\frac{|f|-4}{2}\right) - 2 \\
&= 3V^\circ - 6 - \sum_{f \in \mcf}\frac{|f|-4}{3} \\
&= 3V - 9 + \phi(G,a),
\end{align*}
a contradiction.
\end{proof}

\section{Proof of Theorem \ref{trifreeminorcor}}
\label{sec:trfree}
We prove the following slightly more general statement from which Theorem \ref{trifreeminorcor} follows:

\begin{theorem}\label{trifreeminortheorem}
Let $p \ge4$ be an integer. Suppose that no  graph $G$ with $|E(G)| > (p-2)|V(G)| - \binom{p-1}{2}$ can be obtained by contracting $\max\{p-4,2\}$
 edges from a triangle-free graph on at least $2p-3$ vertices with no $K_p$-minor. Then every triangle-free  graph on $V \geq 2p-5$ 
vertices with no $K_p$-minor has at most  $ (p-2)V - (p-2)^2$ edges.
\end{theorem}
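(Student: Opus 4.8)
The plan is to prove Theorem~\ref{trifreeminortheorem} by induction on $V$, reducing to the hypothesized statement about graphs obtained by contracting $\max\{p-4,2\}$ edges. Suppose for contradiction that $G$ is a triangle-free graph on $V\ge2p-5$ vertices with no $K_p$-minor and $|E(G)|>(p-2)V-(p-2)^2$, chosen with $V$ minimum. The base cases $V=2p-5,2p-4$ should be handled separately (and for these small cases the bound is close to what a complete bipartite graph $K_{p-2,V-p+2}$ achieves, which is the conjectured extremal configuration). For the inductive step, the first task is to establish that $G$ has minimum degree at least $p-1$: if $v$ has degree at most $p-2$, then $G-v$ is triangle-free, $K_p$-minor-free, has $V-1\ge2p-6$ vertices, and by minimality (noting $2p-6\ge2p-5$ fails only when we hit the base case) $|E(G-v)|\le(p-2)(V-1)-(p-2)^2$, so $|E(G)|\le(p-2)V-(p-2)^2$, a contradiction.

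Next I would aim to find a short path or small connected subgraph $S$ in $G$, with $|S|=\max\{p-4,2\}$ edges, such that contracting $S$ does not create a triangle-beyond-control and keeps enough vertices. The point of contracting exactly $\max\{p-4,2\}$ edges is arithmetic: contracting $k$ edges from a triangle-free graph on $V$ vertices with $E$ edges yields a graph on $V-k$ vertices with at least $E-k-(\text{number of "parallel-creating" adjacencies})$ edges; since $G$ is triangle-free, two endpoints of a contracted edge have no common neighbour, so contracting a single edge from a triangle-free graph loses exactly one edge and creates no multi-edges. More carefully, if $S$ is a path on $\max\{p-4,2\}+1$ vertices that induces no extra adjacencies among second neighbourhoods, contracting it yields a graph $G'$ on $V-\max\{p-4,2\}$ vertices with $|E(G')|\ge|E(G)|-c$ for a controlled constant $c$, still with no $K_p$-minor (minors are preserved under contraction), and we want $|E(G')|>(p-2)|V(G')|-\binom{p-1}{2}$. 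Checking $(p-2)V-(p-2)^2 - c > (p-2)(V-\max\{p-4,2\}) - \binom{p-1}{2}$ reduces to an inequality purely in $p$, which is why the specific contraction number is chosen; and we need $V-\max\{p-4,2\}\ge?\,$, i.e. $V\ge 2p-3$ matches the hypothesis after noting $2p-5 + (\text{something})$, so the induction first pushes $V$ up to at least $2p-3$ before contracting.

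The main obstacle I anticipate is the \emph{existence of a suitable set of $\max\{p-4,2\}$ edges to contract without creating a $K_p$-minor or ruining the triangle count}. Since $G$ is triangle-free with minimum degree $\ge p-1$ and many edges, one should be able to locate a short induced path, but one must ensure the contraction does not merge vertices in a way that both (a) loses more than the budgeted number of edges (this requires the contracted set's vertices to have few common neighbours, which triangle-freeness plus a careful choice should give) and (b) the resulting multigraph, after simplification, still has enough edges — equivalently, that few parallel edges are created. A clean way around (b): contract a matching rather than a path, since a matching of size $\max\{p-4,2\}$ in a triangle-free graph, with the matched pairs pairwise "far apart," creates no parallel edges at all and loses exactly $\max\{p-4,2\}$ edges. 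Finding such a spread-out matching in a dense triangle-free graph on $\ge2p-3$ vertices is the technical heart; one would argue that if no such matching exists then $G$ has bounded structure (few vertices, or a dominating set of bounded size) contradicting the edge count. Once the contraction is performed and the hypothesis of the theorem is invoked, the desired contradiction follows immediately, completing the induction.
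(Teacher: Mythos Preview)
Your skeleton---minimal counterexample, base cases $V\le 2p-4$, minimum degree $\ge p-1$, then contract $k=\max\{p-4,2\}$ matching edges and invoke the hypothesis---matches the paper's. The gap is in what you call ``the technical heart.'' You look for a matching whose pairs are pairwise non-adjacent (so that contraction creates \emph{no} parallel edges) and suggest that if none exists then $G$ has ``bounded structure \dots\ contradicting the edge count.'' This is where the argument breaks: the obstruction to such an induced matching is precisely that many pairs of disjoint edges already span a $4$-cycle, and graphs with that property (complete bipartite graphs) can be arbitrarily large and meet the edge bound with equality, so no edge-count contradiction is available along that route.

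The paper's fix is to \emph{stop} looking for a clean matching and instead budget for parallel edges. Since $G$ is triangle-free, any two disjoint edges have at most two edges between them, so contracting any matching $e_1,\dots,e_k$ identifies at most $\binom{k}{2}$ parallel pairs. If some pair of disjoint edges $e_1,e_2$ fails to span a $4$-cycle, extend $\{e_1,e_2\}$ to $k$ disjoint edges (your minimum-degree bound makes this easy) and contract: now at most $\binom{k}{2}-1$ parallel pairs are lost, and a one-line computation shows the contracted graph violates the hypothesis. Hence in a minimal counterexample \emph{every} two disjoint edges span a $4$-cycle; this forces $G$ to be complete bipartite, and complete bipartite $K_p$-minor-free graphs on $V\ge 2p-5$ vertices satisfy the bound. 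That contrapositive twist---turning the failure of the contraction argument into the structural conclusion ``complete bipartite''---is the missing idea in your outline. (For the base cases $V\le 2p-4$, Mantel's theorem gives $|E(G)|\le \lfloor V^2/4\rfloor \le (p-2)V-(p-2)^2$ directly.)
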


Let us first show that Theorem \ref{trifreeminortheorem} implies Theorem \ref{trifreeminorcor}:
\begin{proof}[Proof of Theorem \ref{trifreeminorcor}, assuming Theorem~\ref{trifreeminortheorem}]For $p=2,3$ Theorem~\ref{trifreeminorcor} is easy.
For $p=4,5,6,7$, it follows directly from Theorems \ref{madertheorem} and~\ref{trifreeminortheorem}, as there are no  graphs $G$ on at least $p-1$ vertices  with 
no $K_p$-minor and strictly more than $(p-2)|V(G)|-\binom{p-1}{2}$ edges.

For $p=8$, by Theorem \ref{jorgensentheorem}, a  graph $G$ on at least seven vertices 
with no $K_8$-minor and strictly more than $ 6|V(G)|-21$ edges 
is a $(K_{2,2,2,2,2},5)$-cockade. It is easy to see that, given any four vertices of a $(K_{2,2,2,2,2},5)$-cockade, one can always find a triangle disjoint from those four vertices. Thus a $(K_{2,2,2,2,2},5)$-cockade cannot be obtained by contracting four edges from a triangle-free graph, and the result follows by 
Theorem~\ref{trifreeminortheorem}.

For $p=9$, by Theorem \ref{songthomastheorem}, a  graph $G$ on at least eight vertices 
with no $K_9$-minor and strictly more than $ 7|V(G)| - 28$ edges is either a $(K_{1,2,2,2,2,2},6)$-cockade or isomorphic to $K_{2,2,2,3,3}$. Again it is easy to verify that, given any five vertices of such a graph, one can always find a triangle disjoint from those five vertices. Therefore neither a $(K_{1,2,2,2,2,2},6)$-cockade nor $K_{2,2,2,3,3}$ can be obtained by contracting five edges from a triangle-free graph, and the result follows by Theorem \ref{trifreeminortheorem}.
\end{proof}

Let us remark that the same argument shows that Conjecture~\ref{co:k10} and 
Theorem~\ref{trifreeminortheorem} imply that Theorem~\ref{trifreeminorcor} holds for $p=10$,
formally as follows:

\begin{theorem}
If Conjecture~\ref{co:k10} holds, then every triangle-free  graph on $V \geq 15$ 
vertices with no $K_{10}$-minor has at most  $ 8V - 64$ edges.
\end{theorem}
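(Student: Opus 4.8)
The plan is to mimic exactly the argument already given for $p=8$ and $p=9$ in the proof of Theorem~\ref{trifreeminorcor}, now feeding in Conjecture~\ref{co:k10} in place of Theorems~\ref{jorgensentheorem} and~\ref{songthomastheorem}. So first I would invoke Theorem~\ref{trifreeminortheorem} with $p=10$: it suffices to show that no graph $G$ with $|E(G)| > 8|V(G)| - \binom{9}{2} = 8|V(G)| - 36$ can be obtained by contracting $\max\{p-4,2\} = 6$ edges from a triangle-free graph on at least $2p-3 = 17$ vertices with no $K_{10}$-minor. Assuming Conjecture~\ref{co:k10}, any graph on at least $9$ vertices with no $K_{10}$-minor and strictly more than $8|V(G)| - 36$ edges must be one of the eight exceptional graphs listed in items (1)--(8) of the conjecture.

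The heart of the argument is then a case check: for each of these eight graphs $H$, I would verify that $H$ \emph{cannot} be obtained by contracting six edges from a triangle-free graph on at least $17$ vertices. The cleanest way to see this is the same observation used for the cockade cases above: if $H$ is obtained from a triangle-free graph $G'$ by contracting six edges, then those six edges meet at most $12$ vertices of $G'$, and contracting them produces at most $6$ ``branch vertices''; every triangle of $H$ that avoids all $6$ branch vertices lifts to a triangle in $G'$, contradicting that $G'$ is triangle-free. Hence it suffices to check that each of the eight graphs has the property that \emph{every} set of $6$ vertices is disjoint from some triangle. For the cockade in (1), given any $6$ vertices one locates a $K_7$ (the $7$-clique glued in a leaf block) avoiding all of them — since the graph has at least $17 \ge 9 \cdot 2 - \ldots$ vertices, hence enough blocks — and a $K_7$ certainly contains a triangle off the chosen $6$ vertices. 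For the finitely many complete multipartite graphs (2)--(7) and the $K_{2,2,2,2}+C_5$ graph in (8), each has at most $2 \cdot \ldots$ bounded order, well below $17$, so in fact they are irrelevant: a graph on at least $17$ vertices simply cannot be isomorphic to any of them, and cannot be a contraction image either since contracting only lowers the vertex count to at most $|V(G')| - 6 \ge 11 > |V(H)|$ for those small $H$. Thus only the cockade in (1) genuinely needs the ``triangle avoiding six vertices'' argument, exactly as in the $p=8,9$ cases.

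Concretely I would argue: suppose $G'$ is triangle-free on at least $17$ vertices with no $K_{10}$-minor, and contracting $6$ edges of $G'$ yields a graph $G$ with $|E(G)| > 8|V(G)| - 36$. Then $|V(G)| \ge 17 - 6 = 11 \ge 9$, so by Conjecture~\ref{co:k10}, $G$ is one of (1)--(8); since $|V(G)| \ge 11$ rules out (2)--(8), $G$ is a $(K_{1,1,2,2,2,2,2},7)$-cockade. Any such cockade on at least $11$ vertices has a block that is a copy of $K_{1,1,2,2,2,2,2}$ (which has $11$ vertices and clique number $7$) meeting the rest of the cockade in a $7$-clique, leaving at least $4$ of its vertices — in fact a large independent-free chunk — outside; more simply, any $6$ chosen vertices miss enough of some block to leave a triangle, so some triangle of $G$ avoids all $6$ branch vertices and lifts to a triangle of $G'$, a contradiction. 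Applying Theorem~\ref{trifreeminortheorem} then gives that every triangle-free graph on $V \ge 2 \cdot 10 - 5 = 15$ vertices with no $K_{10}$-minor has at most $(10-2)V - (10-2)^2 = 8V - 64$ edges.

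The main obstacle — and really the only non-bookkeeping point — is making the ``every $6$-set is disjoint from a triangle'' claim fully rigorous for the $(K_{1,1,2,2,2,2,2},7)$-cockade, i.e. confirming that a cockade built from $K_{1,1,2,2,2,2,2}$ always has a triangle avoiding any prescribed $6$ vertices, given the cockade has at least $11$ (indeed $\ge 17$ before contraction, so $\ge 11$ after) vertices. This is the direct analogue of the already-asserted facts ``given any four vertices of a $(K_{2,2,2,2,2},5)$-cockade one can find a triangle disjoint from them'' and the corresponding statement for $(K_{1,2,2,2,2,2},6)$-cockades, and it follows by the same reasoning: a cockade with more vertices than a single block has at least two ``leaf'' blocks, each of which contributes vertices outside the gluing clique; with $7$ parts and at least $11$ vertices per block, one can always route around $6$ forbidden vertices to find three mutually adjacent vertices lying in distinct parts of a single block. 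Everything else is the routine arithmetic $2p-5 = 15$, $2p-3=17$, $\binom{p-1}{2}=36$, $(p-2)^2=64$, $\max\{p-4,2\}=6$ already packaged by Theorem~\ref{trifreeminortheorem}.
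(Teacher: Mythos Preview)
Your overall strategy is exactly the paper's: feed $p=10$ into Theorem~\ref{trifreeminortheorem} and verify, for each exceptional graph in Conjecture~\ref{co:k10}, that it cannot arise by contracting six edges of a triangle-free graph, via the criterion ``every six vertices are disjoint from some triangle.'' That part is fine.

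However, your shortcut dismissing cases (2)--(8) by vertex count is wrong. You assert $|V(G')|-6\ge 11>|V(H)|$, but in fact $K_{1,2,2,2,3,3}$, $K_{2,2,2,2,2,3}$, and the graph in (8) each have $13$ vertices, while $K_{2,3,3,3,3}$ and $K_{2,2,3,3,4}$ have $14$; all of these exceed $11$, so a triangle-free graph on $19$ or $20$ vertices could in principle contract to them. (You also miscount $K_{1,1,2,2,2,2,2}$: it has $12$ vertices, not $11$.) You therefore must run the ``six vertices miss a triangle'' check for \emph{all eight} graphs, not just the cockade. This is straightforward---for each complete multipartite graph in (2)--(7), the two largest parts have total size at most $7<|V(H)|-6$, so any $|V(H)|-6$ remaining vertices meet at least three parts and hence contain a triangle (the deleted-edge variants (4),(6) need one more sentence); for (8), the independence number is $2$, so any seven remaining vertices contain a triangle---but it is not a step you can skip, and it is precisely what the paper means by ``the same argument.''
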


\subsection{Proof of Theorem \ref{trifreeminortheorem}}
Let $p\ge4$ be an integer and let $G$ be a  counterexample with $|V(G)|$ minimum. Let $V=|V(G)|$ and $E=|E(G)|$.
We prove by a series of claims that $G$ is a complete bipartite graph. This leads to a contradiction: suppose $G$ is isomorphic to $K_{n, V-n}$ with $n \leq V/2$. If $n \geq p-1$, then $G$ contains a $K_p$-minor, and if $n \leq p-2$, then $E= n(V-n) \leq (p-2)(V-(p-2))$ as $V \geq 2p-5$.

\begin{claim}
\label{cl:noverts}
The graph $G$ has at least $2p-3$ vertices.
\end{claim}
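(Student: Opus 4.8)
The plan is to argue by contradiction using the minimality of $G$ together with the hypothesis of Theorem~\ref{trifreeminortheorem}. Suppose $2p-5 \le V \le 2p-4$. The hypothesis of Theorem~\ref{trifreeminortheorem} is a statement about graphs on \emph{at least} $2p-3$ vertices, so it does not directly apply; the point of the claim is precisely to push $V$ up to the range where that hypothesis has content, or else to dispose of the small cases by hand. First I would record what being a counterexample means: $G$ is triangle-free, has no $K_p$-minor, $V \ge 2p-5$, and $E > (p-2)V - (p-2)^2$. Since $G$ is triangle-free, every vertex's neighbourhood is an independent set, so $d(v) \le \alpha(G)$ for all $v$, and more usefully $d(v) \le V - 1 - (\text{size of a largest independent set meeting } N(v))$; the crude bound $E \le \lfloor V^2/4 \rfloor$ from triangle-freeness (Mantel's theorem) is the first tool.

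Next I would combine Mantel's bound with the counterexample inequality: $(p-2)V - (p-2)^2 < E \le \lfloor V^2/4\rfloor$, which forces $V^2/4 > (p-2)(V - (p-2))$, i.e.\ $(V - 2(p-2))^2 > 0$ unless... — actually $V^2 - 4(p-2)V + 4(p-2)^2 = (V - 2(p-2))^2 \ge 0$ always, so Mantel alone is consistent and gives nothing. The real leverage must come from the excluded minor. For $V \le 2p-4$ the graph is quite small relative to $p$; a triangle-free graph on at most $2p-4$ vertices with more than $(p-2)V - (p-2)^2$ edges should be forced to contain $K_p$ as a minor. One clean route: if $V = 2p-5$ then $(p-2)V - (p-2)^2 = (p-2)(p-3)$, and a triangle-free graph on $2p-5$ vertices with more than $(p-2)(p-3)$ edges has average degree exceeding $2(p-2)(p-3)/(2p-5) > 2(p-3)$, hence a vertex of degree $\ge 2p-5 > $ roughly; pruning low-degree vertices and using that $K_p$-minor-free triangle-free graphs on $n$ vertices have at most $(p-2)n - \binom{p-1}{2}$ edges for $n \ge p-1$ (which is exactly Mader's Theorem~\ref{madertheorem} for $p \le 7$, and the corresponding later theorems for $p = 8,9$, all available in the excerpt) yields a contradiction directly, since $(p-2)(2p-5) - \binom{p-1}{2} = (p-2)(2p-5) - \binom{p-1}{2}$ which one checks is strictly less than $(p-2)(p-3)$ for the relevant $p$. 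I expect the cleanest argument simply invokes the relevant extremal theorem (Mader / Jørgensen / Song--Thomas, or Conjecture~\ref{co:k10} in the $p=10$ case) on $G$ itself: these say $E \le (p-2)V - \binom{p-1}{2}$ with known exceptions, and $(p-2)V - \binom{p-1}{2} \le (p-2)V - (p-2)^2$ precisely when $\binom{p-1}{2} \ge (p-2)^2$, which fails for $p \ge 4$ — so one must instead check that the exceptional graphs (cockades and the sporadic complete multipartite graphs) all contain triangles or have too few vertices, making them irrelevant, and that in the non-exceptional case the small-$V$ count still beats the counterexample bound.

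So the step I expect to be the main obstacle is reconciling the two different extremal bounds: the theorem's target $(p-2)V - (p-2)^2$ is \emph{larger} than the Mader-type bound $(p-2)V - \binom{p-1}{2}$ for $p \ge 4$, so in the range $2p-5 \le V \le 2p-4$ a counterexample cannot be killed by Mader-type theorems generically; the saving grace must be that \emph{triangle-free} graphs in this narrow vertex range with that many edges are so dense (edge count close to $V^2/4$) that they are forced to have a $K_p$-minor outright, or are forced to be complete bipartite, contradicting either the no-$K_p$-minor hypothesis or — via the complete-bipartite endgame already spelled out after the claim — the edge bound. I would therefore structure the proof as: (i) if $V = 2p-5$, show directly that $E \le (p-2)(p-3)$ for triangle-free $K_p$-minor-free graphs on $2p-5$ vertices, e.g.\ by noting $2p-5 \le p-1$ is false but $2p-5$ is small enough that an explicit degree/minor argument or a short case check on $p \in \{4,\dots,10\}$ suffices; (ii) if $V = 2p-4$, similarly show $E \le (p-2)(p-4) + $ the right constant, i.e.\ $(p-2)(2p-4) - (p-2)^2 = (p-2)^2$, again by an explicit bound; (iii) conclude that no counterexample has $V \le 2p-4$, so $V \ge 2p-3$. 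Given the paper restricts to $p \le 9$ (and $p=10$ conditionally), I anticipate the actual proof is short and largely a finite verification, possibly just citing Mantel's theorem plus the observation that $\lfloor (2p-4)^2/4 \rfloor = (p-2)^2$ and $\lfloor (2p-5)^2/4 \rfloor = (p-2)(p-3)$, so triangle-freeness alone forces $E \le (p-2)V - (p-2)^2$ in exactly this range with equality only for the complete bipartite graph $K_{p-2,p-2}$ or $K_{p-2,p-3}$ — and $K_{p-2,p-2}$ contains a $K_{p-1}$-minor but we need $K_p$, so a little more care is needed, but it is routine.
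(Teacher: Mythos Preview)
Your final anticipation is exactly the paper's proof: Mantel's theorem alone suffices. For $V=2p-4$ one has $\lfloor V^2/4\rfloor=(p-2)^2=(p-2)V-(p-2)^2$, and for $V=2p-5$ one has $\lfloor V^2/4\rfloor=(p-2)(p-3)=(p-2)V-(p-2)^2$; since a counterexample satisfies $E>(p-2)V-(p-2)^2$ \emph{strictly}, this is an immediate contradiction. That is the entire argument, one line in the paper.

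The problem is that you reached this only at the very end, after first dismissing it. Your initial computation was actually fine: from $(p-2)V-(p-2)^2<E\le V^2/4$ you correctly derived $(V-2(p-2))^2>0$, which already rules out $V=2p-4$; you then talked yourself out of it by replacing $>$ with $\ge$. For $V=2p-5$ the missing ingredient was simply the floor (equivalently, integrality of $E$), which turns $V^2/4$ into $(V^2-1)/4=(p-2)(p-3)$ and closes the case. Everything in between --- the appeal to Mader/J\o rgensen/Song--Thomas, the analysis of exceptional cockades, the degree-pruning idea --- is unnecessary: the excluded-minor hypothesis plays no role whatsoever in this claim. Likewise your closing worry about the equality case ($K_{p-2,p-2}$, $K_{p-2,p-3}$, and whether they contain a $K_p$-minor) is irrelevant, because the counterexample inequality is strict and equality in Mantel is not a problem.
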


\begin{proof}
If $V\le2p-4$, then by Mantel's theorem~\cite{Man} $E\le (p-2)(V-p+2)$, contrary to $G$ being a counterexample.
\end{proof}

\begin{claim}
\label{cl:mindeg}
$\delta(G) > p-2$
\end{claim}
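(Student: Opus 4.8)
The claim to prove is $\delta(G) > p-2$, where $G$ is a minimum counterexample to Theorem~\ref{trifreeminortheorem}.

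The plan is a standard minimality argument. Suppose for contradiction that $G$ has a vertex $v$ with $d(v) \le p-2$. I would delete $v$ and apply minimality of $G$ to $G - v$. First I would check that $G - v$ still satisfies the hypotheses needed: it is triangle-free (induced subgraphs of triangle-free graphs are triangle-free), it has no $K_p$-minor (minors are preserved under vertex deletion), and it has $V - 1$ vertices. The one thing to verify is that $V - 1 \ge 2p-5$, i.e. that $G - v$ is large enough for the theorem's conclusion to be meaningful; this follows from Claim~\ref{cl:noverts}, which gives $V \ge 2p-3$, so $V - 1 \ge 2p-4 > 2p-5$. Hence by minimality $G - v$ is not a counterexample, so it satisfies the conclusion:
\begin{align*}
|E(G-v)| \le (p-2)(V-1) - (p-2)^2.
\end{align*}

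Then I would simply count edges: $E = |E(G)| = |E(G-v)| + d(v) \le (p-2)(V-1) - (p-2)^2 + (p-2) = (p-2)V - (p-2)^2$, which contradicts the assumption that $G$ is a counterexample (i.e. that $E > (p-2)V - (p-2)^2$). This gives $\delta(G) > p-2$, i.e. $\delta(G) \ge p-1$.

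There is no real obstacle here; this is a routine "minimum counterexample has large minimum degree" step, and the only fussy point is confirming that deleting a vertex keeps us within the range $V - 1 \ge 2p-5$ where the inductive conclusion applies, which is exactly why Claim~\ref{cl:noverts} was proved first. The more substantive work will come in the later claims that push $G$ toward being complete bipartite — in particular, arguments that exploit the $K_p$-minor-freeness together with the triangle-free condition (for instance, analyzing the neighbourhood of a minimum-degree vertex, or using the hypothesis of Theorem~\ref{trifreeminortheorem} about contracting $\max\{p-4,2\}$ edges) — but those are separate claims, not this one.
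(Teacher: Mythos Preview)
Your proof is correct and essentially identical to the paper's: both delete a vertex of minimum degree, invoke Claim~\ref{cl:noverts} to ensure $V-1\ge 2p-5$, apply minimality to $G-v$, and add back $d(v)\le p-2$ to reach a contradiction. The paper phrases it as a direct chain of inequalities rather than an explicit contradiction, but the content is the same.
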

\begin{proof}
Let $v$ be a vertex of $G$ of minimum degree, and let $G' = G-v$. Then $|E(G')| = E - \delta(G)$ and $|V(G')| = V-1$. Since $G$ is a minimal counterexample and $V>2p-3$ by Claim~\ref{cl:noverts},
\begin{align*}
(p-2)V - (p-2)^2 &< E \\
&=| E(G')| + \delta(G) \\
&\leq (p-2)|V(G')| - (p-2)^2 + \delta(G) \\
&= (p-2)V - (p-2)^2 +\delta(G)-(p-2),
\end{align*}
and so $p-2 < \delta(G)$, as desired.
\end{proof}

\begin{claim} \label{disjointedgesclaim}
For $1\leq k \leq p-2$, given any set of $k$ disjoint edges $\{e_1,\dots,e_k\}$ in $G$, we can find another edge disjoint from each $e_i$, $1\leq i \leq k$. 
\end{claim}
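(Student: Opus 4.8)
The plan is to argue by contradiction: suppose $\{e_1,\dots,e_k\}$ is a set of $k \le p-2$ disjoint edges in $G$ such that no edge of $G$ avoids all of them. Let $S$ be the set of $2k$ endpoints of these edges, so every edge of $G$ has an endpoint in $S$, i.e. $G - S$ has no edges. Contracting each $e_i$ produces a graph $G/\{e_1,\dots,e_k\}$ on $V - k$ vertices; since $G$ is triangle-free and has no $K_p$-minor, neither does this contracted graph (contracting a single edge of a triangle-free graph cannot create a $K_p$-minor that wasn't already present as a minor of $G$; more precisely, the contracted graph is still a minor of $G$). The first goal is to show that $k$ must in fact be small — specifically $k \le \max\{p-4,2\}$ — so that the hypothesis of Theorem~\ref{trifreeminortheorem} applies to rule out the contracted graph having too many edges, and then to derive a contradiction with the edge count of $G$.

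First I would handle the edge count. Because $G - S$ has no edges, every edge of $G$ is incident with $S$, so $E \le \binom{2k}{2} + (\text{edges from } S \text{ to } V \setminus S)$. Since $G$ is triangle-free, the neighbourhood of each vertex is independent, which constrains the edges within $S$: in fact $G[S]$ is triangle-free on $2k \le 2p-4$ vertices, so by Mantel's theorem $G[S]$ has at most $k^2$ edges. Meanwhile each of the $V - 2k$ vertices outside $S$ has all its neighbours in $S$, and by Claim~\ref{cl:mindeg} has degree at least $p-1$. Also each such vertex sends at most ... — here one uses triangle-freeness again: a vertex $u \notin S$ together with any edge $e_i$ whose both endpoints are neighbours of $u$ would form a triangle, so $u$ is adjacent to at most one endpoint of each $e_i$, giving $d(u) \le k$. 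Combined with $d(u) \ge p-1$ from Claim~\ref{cl:mindeg}, this forces $k \ge p-1$, contradicting $k \le p-2$ outright — so actually the contradiction may come already at this stage for all $k$ in the stated range, without needing the contraction hypothesis at all.

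Let me reconsider: the cleanest route is precisely the degree argument just sketched. Fix any vertex $u \notin S$ (such a vertex exists since $V \ge 2p-3 > 2k$ by Claim~\ref{cl:noverts}). All neighbours of $u$ lie in $S$. For each $i$, if $u$ were adjacent to both endpoints of $e_i$, then $u$ together with $e_i$ would be a triangle, contradicting that $G$ is triangle-free; hence $u$ is adjacent to at most one endpoint of each $e_i$, so $d(u) \le k \le p-2$. This contradicts Claim~\ref{cl:mindeg}, which gives $d(u) \ge p-1$.

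The main obstacle — the only place care is needed — is confirming that a vertex outside $S$ genuinely exists, which is why Claim~\ref{cl:noverts} (giving $V \ge 2p-3$) and the bound $2k \le 2(p-2) = 2p-4 < 2p-3$ are invoked; and making sure the "at most one endpoint per $e_i$" step is airtight, i.e. that $e_i$ is an actual edge of $G$ (it is, by hypothesis) so that $u$ adjacent to both its ends really is a triangle. Everything else is immediate. I expect no genuine difficulty here; this claim is a short lemma whose purpose is to feed into the subsequent structural analysis of $G$.
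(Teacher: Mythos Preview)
Your proposal is correct, and the argument you settle on in the third paragraph is exactly the paper's proof: pick a vertex $v\notin S$ (which exists since $V\ge 2p-3>2k$ by Claim~\ref{cl:noverts}), observe that triangle-freeness forces $v$ to have at most one neighbour in each $e_i$, and then use $\deg(v)\ge\delta(G)>p-2\ge k$ from Claim~\ref{cl:mindeg} to find an edge at $v$ missing $S$. The contraction and edge-counting considerations in your first two paragraphs are unnecessary detours---the degree argument alone suffices, as you yourself conclude.
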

\begin{proof}
Let $e_1,\dots,e_k$ be given, where $e_i = x_iy_i$.  By Claim~\ref{cl:noverts} there exists a vertex $v$ not equal to any $x_i,y_i$.  Since $G$ is triangle-free, $v$ can be adjacent to at most one of $\{x_i,y_i\}$ for each $i$. Since $deg(v) \geq \delta(G) > p-2 \geq k$ by Claim~\ref{cl:mindeg}, 
there is an edge incident with  $v$ disjoint from each $e_i$, as desired.
\end{proof}

\begin{claim} \label{4cycleclaim}
Let $e_1,e_2$ be any two disjoint edges of $G$. Then $G[e_1\cup e_2]$ forms a 4-cycle.
\end{claim}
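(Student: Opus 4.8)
The plan is to show that any two disjoint edges $e_1 = x_1y_1$ and $e_2 = x_2y_2$ must span a $4$-cycle in $G$, i.e.\ that among the four ``cross'' pairs $x_1x_2, x_1y_2, y_1x_2, y_1y_2$ at least two edges are present (and since $G$ is triangle-free, if two of them are present they must be the two edges of a $4$-cycle rather than a ``path'' configuration, which would force a triangle). The natural approach is a minor-building / counting argument driven by Claims~\ref{cl:mindeg} and~\ref{disjointedgesclaim}: if $G[e_1\cup e_2]$ had at most one cross edge, I would contract $e_1$ and $e_2$ to produce a graph $G'$ on $V-2$ vertices and try to either apply minimality of $G$ or produce a $K_p$-minor, using that the two contracted vertices together have many neighbours.

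Concretely, suppose for contradiction that $G[e_1\cup e_2]$ is not a $4$-cycle. Contract $e_1$ to a vertex $u_1$ and $e_2$ to a vertex $u_2$; since $G$ is triangle-free, $N(x_i)\cap N(y_i)=\emptyset$, so after contraction $d_{G'}(u_i) \ge d(x_i)+d(y_i)-2 \ge 2\delta(G)-2 > 2(p-2)-2 = 2p-6$ by Claim~\ref{cl:mindeg} (the ``$-2$'' accounting for the other contracted edge and for possibly losing $u_1u_2$). The graph $G'$ has $V-2 \ge 2p-5$ vertices and is $K_p$-minor-free (a minor of $G$). If $G'$ were triangle-free, minimality of $G$ would give $|E(G')| \le (p-2)(V-2) - (p-2)^2$; but we can estimate $|E(G')| \ge E - c$ for a small constant $c$ (the number of edges destroyed by the two contractions: at most $4$ plus the at-most-one cross edge), and combined with $E > (p-2)V-(p-2)^2$ this yields a contradiction provided the arithmetic works out, i.e.\ provided $c$ is strictly less than $2(p-2)$, which holds since $p\ge4$. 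If instead $G'$ contains a triangle, that triangle must use $u_1$ or $u_2$ (since outside the contracted vertices $G'$ looks like $G$, which is triangle-free); a triangle through $u_1$ corresponds in $G$ to a path of length two joining a neighbour of $x_1$ to a neighbour of $y_1$ through a common third vertex — this is exactly the kind of structure I would feed into the hypothesis of Theorem~\ref{trifreeminortheorem}: contract a bounded number of further edges to eliminate the triangles one at a time and obtain a graph that is too dense to be $K_p$-minor-free, appealing to the standing hypothesis about contracting $\max\{p-4,2\}$ edges.

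I would organize the argument so that the ``bad'' configurations (at most one cross edge) are ruled out by combining the minimum-degree bound with the edge-count from minimality, reserving the hypothesis of Theorem~\ref{trifreeminortheorem} for the case where contracting to destroy the $4$-cycle obstruction creates triangles. A cleaner route, which I suspect is what the paper does, is to first observe via Claim~\ref{disjointedgesclaim} that $G$ has many disjoint edges available, then argue that if some pair $e_1,e_2$ failed to span a $4$-cycle we could build, by contracting these two edges together with a few more chosen edges (total $\le \max\{p-4,2\}$), a graph on $\ge 2p-3$ vertices that is triangle-free, has no $K_p$-minor, yet has more than $(p-2)|V| - \binom{p-1}{2}$ edges, directly contradicting the standing hypothesis of Theorem~\ref{trifreeminortheorem}. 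The key point enabling the edge count is again that contracting an edge $x_iy_i$ in a triangle-free graph loses only the one contracted edge (no parallel edges arise, since $x_i,y_i$ have no common neighbour), so contraction is ``cheap.''

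The main obstacle I expect is the bookkeeping in the case where contracting $e_1$ and $e_2$ creates triangles: one must control both how many extra edges need to be contracted to restore triangle-freeness (to stay within the $\max\{p-4,2\}$ budget of the hypothesis) and the resulting vertex count (to stay at $\ge 2p-3$ vertices), all while tracking the edge count carefully enough to contradict the density bound. Handling the small cases $p=4,5$ — where the budget $\max\{p-4,2\} = 2$ is tight and the vertex-count slack is minimal — will require the most care, and it is plausible that for these values one argues directly from Claims~\ref{cl:noverts}--\ref{disjointedgesclaim} and the minimality of $G$ without invoking the full strength of the hypothesis.
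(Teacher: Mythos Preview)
Your overall plan---contract $e_1,e_2$ (and possibly more disjoint edges) and compare edge counts---is the right one, and your third paragraph almost lands on the paper's argument. The gap is a misreading of the standing hypothesis of Theorem~\ref{trifreeminortheorem}. That hypothesis does \emph{not} assert anything about triangle-free graphs satisfying the Mader bound; it asserts that \emph{every} graph obtained by contracting exactly $\max\{p-4,2\}$ edges from a triangle-free, $K_p$-minor-free graph on at least $2p-3$ vertices satisfies $|E|\le(p-2)|V|-\binom{p-1}{2}$. In particular the contracted graph $G'$ is not required to be triangle-free, and the ``at least $2p-3$ vertices'' clause refers to the original graph $G$, not to $G'$. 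Once the hypothesis is read correctly, your entire discussion of ``eliminating triangles one at a time'' (paragraphs~2 and~4) evaporates: there is nothing to eliminate, and the ``main obstacle'' you anticipate does not exist.

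The paper's proof is then a short computation. Set $k=\max\{p-4,2\}$, use Claim~\ref{disjointedgesclaim} to extend $\{e_1,e_2\}$ to $k$ pairwise disjoint edges, and contract all of them to obtain $G'$; the hypothesis gives $|E(G')|\le (p-2)(V-k)-\binom{p-1}{2}$ directly. The only bookkeeping is the number $\ell$ of parallel edges identified, and here your assertion that ``no parallel edges arise'' is incorrect once several edges are contracted: between each pair $e_i,e_j$ there can be up to two cross edges (triangle-freeness forbids more), so contracting produces up to one parallel identification per pair, giving $\ell\le\binom{k}{2}$; since $e_1,e_2$ have at most one cross edge by assumption, in fact $\ell\le\binom{k}{2}-1$. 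Substituting $E=|E(G')|+k+\ell$ and simplifying yields $E\le(p-2)V-(p-2)^2$, the desired contradiction---with no case split on triangles in $G'$ and no separate treatment of small $p$ beyond a one-line $\epsilon$ in the arithmetic.
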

\begin{proof}
Since $G$ is triangle-free, there are at most two edges between $e_1$ and $e_2$, and if there are two edges, $G[e_1\cup e_2]$ forms a 4-cycle. Suppose for a contradiction that there is at most one edge between $e_1$ and $e_2$.
Let $k=\max\{p-4,2\}$.
By Claim \ref{disjointedgesclaim}, we can find pairwise disjoint edges $e_3,\dots, e_{k}$, each disjoint from both $e_1$ and $e_2$. Let $G'$ be the graph obtained by contracting all edges $e_1,\dots,e_{k}$ and let $\ell$ denote the number of parallel edges identified.  
Then $|E(G')| = E-k-\ell$, $|V(G')| = V-k$, and $|E(G')| \leq (p-2)|V(G')| - \binom{p-1}{2}$ by hypothesis as $G'$ is obtained from the graph $G$ by contracting $k$ edges.
Since there are $\binom{k}{2}$ pairs of edges in $\{e_1,\dots,e_{k}\}$ and there is at most one edge between $e_1$ and $e_2$, we have $\ell \leq \binom{k}{2}-1$. If $p\le5$, then let $\epsilon=1$; otherwise let $\epsilon=0$. Then
\begin{align*}
E &= |E(G')| + \ell + k  \\
&\leq \left((p-2)|V(G')| - \binom{p-1}{2}\right) + \left(\binom{k}{2} -1\right) + k \\
&=  (p-2)(V-k)  - \frac{(p-1)(p-2) - k(k-1)}{2} + k - 1 \\
&= (p-2)V - (p-2)^2-\epsilon\\
&\le (p-2)V - (p-2)^2,
\end{align*}
a contradiction since $G$ is a counterexample. 
\end{proof}

\begin{claim}
$G$ is a complete bipartite graph.
\end{claim}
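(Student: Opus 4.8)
The plan is to show that $G$ is triangle-free (given), connected with high minimum degree (Claims~\ref{cl:noverts}--\ref{cl:mindeg}), and, crucially, has the property established in Claim~\ref{4cycleclaim} that \emph{every} pair of disjoint edges spans a $4$-cycle. From this last property I would extract a global bipartition. First I would fix a vertex $u$ and let $A$ be the set of vertices at even distance from $u$ and $B$ the set at odd distance; since $G$ is triangle-free it has no triangle, but of course this alone does not force bipartiteness, so the real work is to rule out odd cycles using Claim~\ref{4cycleclaim}. The key observation is that if $xy$ and $zw$ are disjoint edges then, because they must span a $4$-cycle, $x$ is adjacent to exactly one of $z,w$ and $y$ is adjacent to the other; equivalently, the relation ``joined by an edge'' on disjoint edges behaves like the adjacency in $K_{2,2}$, which is a very rigid constraint.

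Concretely, I would argue as follows. Take any edge $e=ab$. By Claim~\ref{disjointedgesclaim} (with $k=1$, using $\delta(G)>p-2\ge 2$ and $V\ge 2p-3\ge 5$) there is an edge disjoint from $e$, so $G$ is not a star. Now I claim the neighbourhoods $N(a)$ and $N(b)$ partition $V(G)\setminus\{a,b\}$ together with $\{a,b\}$ themselves in a bipartite way. Suppose $x\in N(a)$ and $x\notin N(b)$; pick (via Claim~\ref{disjointedgesclaim}) an edge $xz$ with $z\notin\{a,b\}$, and note $z\ne$ anything forbidden. Then $ax$ and $bz'$ for a suitable disjoint edge, or more directly: consider the disjoint edges $ax$ (wait, $ax$ and $b?$ share nothing only if the other edge avoids $a,x$) — I would instead systematically use that for disjoint edges $e_1,e_2$ the bipartite graph between them is exactly a perfect matching $K_{1,1}\cup K_{1,1}$ inside a $C_4$, giving: if $a\sim x$ and $x\sim z$ with $a,x,z$ distinct and $z\not\sim a$ (no triangle on $a,x,z$ forces $z\not\sim a$ automatically), then taking any edge $bw$ disjoint from $\{a,x\}$ forces structure that eventually pins every vertex into $N(a)\cup N(b)$. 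The cleanest route: show that $V(G)\setminus(N(a)\cup N(b))=\emptyset$ and that $N(a)\cap N(b)=\emptyset$, so $(N(a),N(b))$ is a bipartition, and then show completeness across the bipartition by another application of Claim~\ref{4cycleclaim} to pairs of disjoint edges, one in each part's ``private'' matching.

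More carefully, for completeness: suppose $x\in N(a)$, $y\in N(b)$, $x\ne b$, $y\ne a$, $x\ne y$, and $x\not\sim y$; then $ax$... no, $bx$? We need two \emph{disjoint} edges. Take $ax$ and $by$: these are disjoint provided $a,x,b,y$ are four distinct vertices, which holds since $x\ne b$, $y\ne a$, $x\ne y$, $a\ne b$. By Claim~\ref{4cycleclaim}, $G[\{a,x,b,y\}]$ is a $4$-cycle; since $a\sim b$, $a\sim x$, $b\sim y$ are three of its edges, the fourth edge of the $4$-cycle must be $xy$ (the $4$-cycle on these four vertices containing the path $x-a-b-y$ is $x-a-b-y-x$), so $x\sim y$, a contradiction. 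Hence the bipartition is complete. Analogously, to see $N(a)\cap N(b)=\emptyset$: a common neighbour $x$ would give a triangle $axb$, impossible since $G$ is triangle-free and $a\sim b$. Finally, to see $N(a)\cup N(b)=V(G)$: if $w$ is adjacent to neither $a$ nor $b$, pick an edge $wz$ (exists since $\delta>0$); if $z\notin\{a,b\}$ then $ab$ and $wz$ are disjoint and must span a $4$-cycle, but $a\not\sim w$ and $b\not\sim w$ means at most one edge ($a$ or $b$ to $z$) joins them, contradicting Claim~\ref{4cycleclaim}; and $z\in\{a,b\}$ is impossible since $w\notin N(a)\cup N(b)$.

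The main obstacle I anticipate is handling the low-$p$ degenerate cases and the small-order bookkeeping: Claim~\ref{disjointedgesclaim} is stated for $1\le k\le p-2$ and requires $\delta(G)>p-2\ge k$ together with a spare vertex, which is guaranteed by $V\ge 2p-3$; for $p=4$ one must check $k=\max\{p-4,2\}=2$ usages stay within range, and the argument above only ever needs $k\le 2$ disjoint edges at a time, so this is fine, but it deserves an explicit remark. A second, more subtle point is ensuring all the ``four distinct vertices'' side conditions in the applications of Claim~\ref{4cycleclaim} genuinely hold; each such case reduces to the triangle-freeness of $G$ or to $a\ne b$, so there is no real difficulty, only care. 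Once the bipartition $(N(a),N(b))$ is shown to be complete, the claim is proved, and the theorem follows from the contradiction derived in the paragraph preceding Claim~\ref{cl:noverts}: $K_{n,V-n}$ with $n\le p-2$ has at most $(p-2)(V-(p-2))$ edges when $V\ge 2p-5$, while $n\ge p-1$ produces a $K_p$-minor.
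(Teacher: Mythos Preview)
Your argument is correct and follows essentially the same route as the paper: fix an edge $ab$, use triangle-freeness to get $N(a)\cap N(b)=\emptyset$, use Claim~\ref{4cycleclaim} on $ab$ and an edge through any outside vertex to get $N(a)\cup N(b)=V(G)$, and then use Claim~\ref{4cycleclaim} again on a suitable pair of disjoint edges to force every cross pair to be adjacent. The paper's writeup is more compact and uses the pair $xy',\,x'y$ rather than your $ax,\,by$ for the completeness step, but the two choices are interchangeable; your exploratory detours (the distance-class bipartition, the appeal to Claim~\ref{disjointedgesclaim} to rule out a star) are unnecessary and can be dropped, and you should state explicitly that edges inside $N(a)$ or $N(b)$ are forbidden by triangle-freeness.
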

\begin{proof}
Let $e = xy$ be an edge and let $v \in V(G) \setminus \{x,y\}$. By Claims~\ref{cl:mindeg} and~\ref{4cycleclaim}, $v$ is adjacent to either $x$ or $y$, but not both as $G$ is triangle-free. Thus $V(G) \setminus \{x,y\}$ can be partitioned into two disjoint sets $X' \cup Y'$ where every vertex in $X'$ is adjacent to $y$ and every vertex in $Y'$ is adjacent to $x$. Since $G$ is triangle-free, there are no edges between vertices of $X'$ and between vertices of $Y'$. Thus $G$ is bipartite with bipartition $X \cup Y$, where $X = X' \cup \{x\}$ and $Y= Y' \cup \{y\}$. Moreover, for any $x' \in X'$ and $y' \in Y'$, the two edges $xy'$ and $x'y$ induce a 4-cycle by Claim \ref{4cycleclaim}. Therefore $x'$ is adjacent to $y'$, completing the proof of the claim.
\end{proof}

\baselineskip 11pt
\vfill
\noindent
This material is based upon work supported by the National Science Foundation.
Any opinions, findings, and conclusions or
recommendations expressed in this material are those of the authors and do
not necessarily reflect the views of the National Science Foundation.


\begin{thebibliography}{99}

\def\JCTB{{\it J.~Combin.\ Theory Ser.\ B}}
\def\CMUC{{\it Comment. Math. Univ. Carol.}}
\def\TAMS{{\it Trans.\ Amer.\ Math.\ Soc.}}
\def\JAMS{{\it J.~Amer.\ Math.\ Soc.}}
\def\PAMS{{\it Proc. Amer. Math. Soc.}}
\def\DM{{\it Discrete Math.}}
\def\CM{{\it Contemporary Math.}}
\def\GC{{\it Graphs and Combin.}}
\def\COM{{\it Combinatorica}}
\def\JGT{{\it J.~Graph Theory}}
\def\JAlgorithms{{\it J.~Algorithms}}
\def\SIAMDM{{\it SIAM J.~Disc.\ Math.}}
\def\CPC{{\it Combinatorics, Probability and Computing}}
\def\EJC{Electron.\ J.~Combin.}
\def\EuropJC{Europ.\ J.~Combin.}


\bibitem{BolCatErd} B. Bollob\'as, P. A. Catlin and P. Erd\"os,
Hadwiger's conjecture is true for
almost every graph, {\it\EuropJC\ \bf1} (1980), 195--199.



\bibitem{FerdlV}W. Fernandez de la Vega,
On the maximum density of graphs which have no subtractions to $K^s$,
{\sl Discrete Math. \bf 46} (1983), 109--110.

\bibitem{Jor} L.~Jorgensen, Contraction to $K_8$,
{\it\JGT} {\bf 18} (1994), 431--448.


\bibitem{Kosminor} A.~V.~Kostochka,
The minimum Hadwiger number for graphs with a given mean degree of vertices,
{\it Metody Diskret.\ Analiz.} {\bf 38} (1982), 37---58 (in Russian).

\bibitem{Koshad} A.~V.~Kostochka,
A lower bound for the product of the Hadwiger numbers of a graph and
its complement,
{\it Combinatorial analysis} {\bf 8} (1989), 50---62 (in Russian).


\bibitem{MadHom}W. Mader,
Homomorphies\"atze f\"ur Graphen,
{\it Math. Ann.} {\bf 178} (1968), 154---168.

\bibitem{Man} W.~Mantel,  Problem 28 (Solution by H. Gouwentak, W. Mantel, J. Teixeira de Mattes, F. Schuh and W. A. Wythoff), 
{\it Wiskundige Opgaven \bf 10} (1907), 60–-61.



\bibitem{McCTho} R.~McCarty and R.~Thomas,
The extremal function for bipartite linklessly embeddable graphs,
{\tt arXiv:1708.08439}.


\bibitem{RobSeyThoSachs} N.~Robertson, P.~D.~Seymour and R.~Thomas, Sach's linkless embedding
conjecture,
\JCTB\ {\bf64} (1995), 185--227.

%

\bibitem{SonThoK9} Z.~Song and R.~Thomas,
The extremal function for $K_9$ minors,
{\it\JCTB} {\bf 96} (2006), 240--252.

\bibitem{ThoZhu} R.~Thomas and D.~Zhu, unpublished.

%
%
%


\end{thebibliography}
\end{document}